\documentclass[reqno]{amsart}
\usepackage{graphicx}
\usepackage{hyperref}
\title[Projective distance inequalities]{Some projective distance inequalities for simplices in complex projective space}
\author{Mark Fincher}
\address{Department of Mathematics\\
University of North Texas\\
1155 Union Circle \#311430\\
Denton, TX 76203}
\email{mfincher777@gmail.com}
\thanks{Financial support provided to Mark Fincher in the form of a UNT SUMS fellowship as part of the UNT mathematics department's NSF funded RTG grant DMS-0943870}
\author{Heather Olney}
\address{Department of Mathematics\\
University of North Texas\\
1155 Union Circle \#311430\\
Denton, TX 76203}
\email{HeatherOlney@my.unt.edu}
\thanks{Financial support provided to Heather Olney in the form of a UNT SUMS fellowship as part of the
UNT mathematics department's NSF funded RTG grant DMS-0943870}
\author{William Cherry}
\address{Department of Mathematics\\
University of North Texas\\
1155 Union Circle \#311430\\
Denton, TX 76203}
\email{wcherry@unt.edu}
\subjclass[2010]{51N15 (32Q45)}
\keywords{projective height, projective simplex, determinant}
\newtheorem{theorem}{Theorem}
\newtheorem{proposition}[theorem]{Proposition}
\newtheorem{cor}[theorem]{Corollary}
\newtheorem{conjecture}[theorem]{Conjecture}
\theoremstyle{remark}
\newtheorem*{remark*}{Remark}
\newtheorem{example}[theorem]{Example}
\date{August 19, 2014}
\begin{document}
\begin{abstract}We prove inequalities relating the absolute value of the
determinant of $n+1$ linearly independent unit vectors in $\mathbf{C}^{n+1}$
and the projective distances from the vertices to the hyperplanes
containing the opposite faces of the simplices in complex projective $n$-space
whose vertices or faces are determined by the given vectors.
\end{abstract}
\maketitle
A basis of unit vectors in $\mathbf{C}^{n+1}$ determines the vertices (or the
faces) of a simplex in $n$-dimensional complex projective space. For reasons
originally motivated by an inequality in complex function theory proven
by Eremenko and the third author \cite{CherryEremenko}, we investigated the
relationship between the determinant of the vectors forming the basis and
the projective distances from each vertex of the simplex to the hyperplane 
containing the face of the opposite side. We show that if $d_{\min}$ denotes
the minimum of these projective distances and if $D$ denotes the determinant of
the basis vectors, then $d_{\min}^n\le|D|\le d_{\min}.$
\par\smallskip
\textit{Acknowledgments.} Surya Raghavendran, during
a research experiences for undergraduates project supervised by the
third author and funded by a SUMS fellowship as part of the
UNT Mathematics Department's NSF funded RTG grant in the summer of 2012, 
made initial investigations into the relationship between the singular values
of the matrix formed by three unit vectors in $\mathbf{C}^3$ and the
projective side lengths of the corresponding projective triangle in
$\mathbf{CP}^2.$ Our results here build upon his initial work.
We also thank Charles Conley for a stimulating discussion that led us to
the proof of Proposition~\ref{lagrange}.
\par\smallskip
Let $\mathbf{e}_0,\dots\mathbf{e}_n$ be a basis for $\mathbf{C}^{n+1}.$
Given two vectors $\mathbf{a}=a_0\mathbf{e}_0+\dots+a_n\mathbf{e}_n$
and $\mathbf{b}=b_0\mathbf{e}_0+\dots+b_n\mathbf{e}_n$ in $\mathbf{C}^{n+1},$
we use $\mathbf{a}\cdot\mathbf{b}$ to denote the standard dot product,
$$
\mathbf{a}\cdot\mathbf{b}=a_0b_0+\dots+a_nb_n,
$$
rather than the Hermitian inner-product more typically used
with complex vector spaces. Thus, in our notation,
$$
|\mathbf{a}|^2=\mathbf{a}\cdot\overline{\mathbf{a}},
$$
where the bar denotes complex conjugation, as usual.
\par\smallskip
For $k=0,\dots,n+1,$ we let $\Lambda^k\mathbf{C}^{n+1}$ denote the $k$-th
exterior power of the vector space $\mathbf{C}^{n+1},$ and we recall that
$$
\mathbf{e}_0\wedge\mathbf{e}_1\wedge\dots\mathbf{e}_{k-1},\dots,
\mathbf{e}_{i_1}\wedge\mathbf{e}_{i_2}\wedge\dots\wedge\mathbf{e}_{i_k},\dots,
\mathbf{e}_{n+1-k}\wedge\mathbf{e}_{n+2-k}\wedge\dots\wedge\mathbf{e}_n,
$$
where $0\le i_1 < i_2 < \dots < i_k \le n$
form a basis for $\Lambda^k\mathbf{C}^{n+1}.$ By declaring this basis to
be orthonormal in $\Lambda^k\mathbf{C}^{n+1},$ the norm and dot product on
$\mathbf{C}^{n+1}$ extends to a norm and inner product on
$\Lambda^k\mathbf{C}^{n+1}.$ For a detailed introduction to exterior
algebras and wedge products, see \cite{BowenWang}.
\begin{proposition}\label{dotofwedge}
Let $1\le k \le n+1$ be an integer, and let $\mathbf{v}_1,\dots,\mathbf{v}_k$
and $\mathbf{w}_1,\dots,\mathbf{w}_k$ be vectors in $\mathbf{C}^{n+1}.$
Then,
$$
(\mathbf{v}_1\wedge\dots\wedge\mathbf{v}_k)\cdot
(\mathbf{w}_1\wedge\dots\wedge\mathbf{w}_k) =
\det(\mathbf{v}_i\cdot\mathbf{w}_j)_{1\le i,j \le k}.
$$
\end{proposition}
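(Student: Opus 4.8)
The plan is to use the fact that both sides of the identity are multilinear in the full tuple $(\mathbf{v}_1,\dots,\mathbf{v}_k,\mathbf{w}_1,\dots,\mathbf{w}_k)$ and, separately, alternating in $(\mathbf{v}_1,\dots,\mathbf{v}_k)$ and in $(\mathbf{w}_1,\dots,\mathbf{w}_k)$, and then to reduce to the case in which every input vector is one of the basis vectors $\mathbf{e}_0,\dots,\mathbf{e}_n$. On the left, multilinearity and the alternating property are built into the wedge product, and the inner product on $\Lambda^k\mathbf{C}^{n+1}$ is bilinear by construction. On the right, $\mathbf{v}_i\cdot\mathbf{w}_j$ is linear in $\mathbf{v}_i$ and in $\mathbf{w}_j$, while the determinant is multilinear and alternating in its rows (indexed by $i$) and in its columns (indexed by $j$). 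Expanding each $\mathbf{v}_i$ and each $\mathbf{w}_j$ in the basis, it therefore suffices to prove the identity when $\mathbf{v}_i=\mathbf{e}_{p_i}$ and $\mathbf{w}_j=\mathbf{e}_{q_j}$ for some indices $p_1,\dots,p_k$ and $q_1,\dots,q_k$ in $\{0,\dots,n\}$.

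First I would dispose of the degenerate cases. If two of the $p_i$ agree, then $\mathbf{v}_1\wedge\dots\wedge\mathbf{v}_k=0$ and two rows of $(\mathbf{v}_i\cdot\mathbf{w}_j)$ coincide, so both sides vanish; similarly if two of the $q_j$ agree. Hence I may assume the $p_i$ are pairwise distinct and the $q_j$ are pairwise distinct. Since permuting the $\mathbf{v}_i$ (or the $\mathbf{w}_j$) changes both sides by the same sign — by anti-symmetry of the wedge product on the left and of the determinant in its rows, respectively columns, on the right — I may arrange that $p_1<\dots<p_k$ and $q_1<\dots<q_k$.

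In this normalized situation the verification is direct. The left-hand side is $(\mathbf{e}_{p_1}\wedge\dots\wedge\mathbf{e}_{p_k})\cdot(\mathbf{e}_{q_1}\wedge\dots\wedge\mathbf{e}_{q_k})$, which by the declared orthonormality of the standard basis of $\Lambda^k\mathbf{C}^{n+1}$ equals $1$ if $(p_1,\dots,p_k)=(q_1,\dots,q_k)$ and $0$ otherwise. On the right-hand side, $\mathbf{e}_{p_i}\cdot\mathbf{e}_{q_j}=\delta_{p_i q_j}$ because $\mathbf{e}_0,\dots,\mathbf{e}_n$ is orthonormal for the dot product; if the two increasing tuples coincide the matrix is the identity and has determinant $1$, while if they differ, then — two strictly increasing $k$-tuples with the same underlying set being necessarily identical — some $p_i$ differs from every $q_j$, the $i$-th row is zero, and the determinant is $0$. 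The two sides agree in every case, which completes the proof.

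The only delicate point is the bookkeeping in the reduction step: checking that the permutation signs match on the two sides and that the vanishing cases correspond, so I do not anticipate a genuine obstacle. An alternative would be to expand both wedge products in the basis of $\Lambda^k\mathbf{C}^{n+1}$, so that their coefficients become the $k\times k$ minors of the coordinate matrices of the $\mathbf{v}_i$ and of the $\mathbf{w}_j$, compute the inner product using orthonormality to obtain a sum of products of corresponding pairs of minors, and then identify the result as $\det(\mathbf{v}_i\cdot\mathbf{w}_j)$ via the Cauchy--Binet formula; I favor the multilinearity argument because it keeps the proof self-contained.
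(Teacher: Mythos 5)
Your proof is correct. Note, though, that the paper does not actually prove this proposition at all: it simply cites it as Exercise~39.3 in \cite{BowenWang}, so any genuine argument you give is necessarily a ``different route.'' Your route is the standard and natural one: since the paper uses the bilinear dot product $\mathbf{a}\cdot\mathbf{b}=\sum a_ib_i$ rather than a Hermitian pairing, both sides really are $\mathbf{C}$-multilinear in all $2k$ arguments (on the right because row $i$ of the Gramian depends only on $\mathbf{v}_i$ and column $j$ only on $\mathbf{w}_j$), and your reduction to strictly increasing tuples of basis vectors is airtight --- in particular the observation that two distinct strictly increasing $k$-tuples must have distinct underlying sets, forcing a zero row, correctly handles the off-diagonal case, and the permutation signs match because the wedge product and the determinant are both alternating under row (resp.\ column) permutations. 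What your argument buys is self-containedness, at the cost of the multilinearity bookkeeping you flag; what the paper's citation buys is brevity. Your Cauchy--Binet remark is also a correct alternative, essentially equivalent to expanding both wedges in the orthonormal basis of $\Lambda^k\mathbf{C}^{n+1}$. One small point worth making explicit if you write this up: the declared orthonormality of the wedges of basis vectors in $\Lambda^k\mathbf{C}^{n+1}$ is exactly what makes the base case work, and it is consistent with the formula (the Gramian of $\mathbf{e}_{p_1},\dots,\mathbf{e}_{p_k}$ against $\mathbf{e}_{q_1},\dots,\mathbf{e}_{q_k}$ being a permutation-free $0$--$1$ matrix), so there is no circularity.
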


\begin{remark*}The matrix of dot products on the right is called a
\textit{Gramian} matrix.
\end{remark*}
\begin{proof}
This is Exercise~39.3 in \cite{BowenWang}.
\end{proof}

\begin{cor}\label{normofwedge}
Let $\mathbf{v}_1,\dots,\mathbf{v}_k$ be $k$ vectors in $\mathbf{C}^{n+1}.$
Then,
$$
|\mathbf{v}_1\wedge\dots\wedge\mathbf{v}_k|^2 = 
\det(\mathbf{v}_i\cdot\overline{\mathbf{v}}_j)_{1\le i,j \le k}.
$$
\end{cor}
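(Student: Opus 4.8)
The plan is to deduce this directly from Proposition~\ref{dotofwedge} by specializing $\mathbf{w}_j=\overline{\mathbf{v}}_j$ for each $j$. The only thing needing care is that the inner product on $\Lambda^k\mathbf{C}^{n+1}$ was extended from $\mathbf{C}^{n+1}$ in the same bilinear fashion, so that for a $k$-vector $\mathbf{u}$ one still has $|\mathbf{u}|^2=\mathbf{u}\cdot\overline{\mathbf{u}}$, and that complex conjugation commutes with the wedge product.

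First I would record the identity $\overline{\mathbf{v}_1\wedge\dots\wedge\mathbf{v}_k}=\overline{\mathbf{v}}_1\wedge\dots\wedge\overline{\mathbf{v}}_k$. To see this, expand each $\mathbf{v}_i$ in the basis $\mathbf{e}_0,\dots,\mathbf{e}_n$ and multiply out the wedge product; the coefficients of $\mathbf{v}_1\wedge\dots\wedge\mathbf{v}_k$ in the chosen orthonormal basis of $\Lambda^k\mathbf{C}^{n+1}$ are obtained from the coefficients $a_{i,j}$ of the $\mathbf{v}_i$ by multiplication and addition together with reordering of indices, and the latter contributes only real factors $\pm 1$. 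Hence conjugating the $a_{i,j}$ conjugates the coefficients of the resulting $k$-vector. (Equivalently: coordinatewise conjugation is the unique antilinear map on $\mathbf{C}^{n+1}$ fixing the $\mathbf{e}_j$, and it induces an antilinear map on $\Lambda^k\mathbf{C}^{n+1}$ fixing the basis $k$-vectors, which must therefore be coordinatewise conjugation there as well.)

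Then I would simply chain the definitions: by the convention $|\mathbf{u}|^2=\mathbf{u}\cdot\overline{\mathbf{u}}$ on $\Lambda^k\mathbf{C}^{n+1}$ together with the identity of the previous paragraph,
$$
|\mathbf{v}_1\wedge\dots\wedge\mathbf{v}_k|^2
=(\mathbf{v}_1\wedge\dots\wedge\mathbf{v}_k)\cdot\overline{(\mathbf{v}_1\wedge\dots\wedge\mathbf{v}_k)}
=(\mathbf{v}_1\wedge\dots\wedge\mathbf{v}_k)\cdot(\overline{\mathbf{v}}_1\wedge\dots\wedge\overline{\mathbf{v}}_k),
$$
and applying Proposition~\ref{dotofwedge} with $\mathbf{w}_j=\overline{\mathbf{v}}_j$ turns the right-hand side into $\det(\mathbf{v}_i\cdot\overline{\mathbf{v}}_j)_{1\le i,j\le k}$, which is the claimed formula.

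I do not anticipate a genuine obstacle here; all of the real content lives in Proposition~\ref{dotofwedge}, and the corollary is just its specialization combined with the bookkeeping that conjugation passes through the wedge. The one place to stay alert is to avoid conflating this bilinear dot product with the Hermitian inner product — keeping the convention $|\mathbf{a}|^2=\mathbf{a}\cdot\overline{\mathbf{a}}$ explicit at every step is what prevents that confusion.
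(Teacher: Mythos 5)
Your proof is correct and is exactly the derivation the paper leaves implicit (the corollary is stated without proof as an immediate consequence of Proposition~\ref{dotofwedge}): specialize $\mathbf{w}_j=\overline{\mathbf{v}}_j$, using that coordinatewise conjugation commutes with the wedge product and that the declared orthonormal basis of $\Lambda^k\mathbf{C}^{n+1}$ makes $|\mathbf{u}|^2=\mathbf{u}\cdot\overline{\mathbf{u}}$ the right norm there. Your care in distinguishing the bilinear dot product from a Hermitian pairing is exactly the point that makes the bookkeeping go through.
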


\begin{cor}\label{normofwedgeineq}
Let $\mathbf{v}_1,\dots,\mathbf{v}_k$ be $k$ vectors in $\mathbf{C}^{n+1}.$
Then,
$$
|\mathbf{v}_1\wedge\dots\wedge\mathbf{v}_k| \le |\mathbf{v}_1|\cdot\dots\cdot
|\mathbf{v}_k|.
$$
Equality holds if and only if one of the vectors is the zero vector
or if $\mathbf{v}_i\cdot\overline{\mathbf{v}}_j=0$
for all $i\ne j.$
\end{cor}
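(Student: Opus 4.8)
The plan is to derive everything from Corollary~\ref{normofwedge}, which identifies $|\mathbf{v}_1\wedge\dots\wedge\mathbf{v}_k|^2$ with $\det G$, where $G=(\mathbf{v}_i\cdot\overline{\mathbf{v}}_j)_{1\le i,j\le k}$ is the (positive semi-definite, Hermitian) Gramian. In these terms the inequality is exactly $\det G\le\prod_{i=1}^k G_{ii}=\prod_{i=1}^k|\mathbf{v}_i|^2$, i.e.\ Hadamard's inequality for positive semi-definite Hermitian matrices. Rather than quote Hadamard as a black box, I would give a short self-contained argument by orthogonalization, which has the added benefit of making the equality clause transparent.

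Concretely, I would run the Gram--Schmidt process relative to the Hermitian inner product $\langle\mathbf{a},\mathbf{b}\rangle=\mathbf{a}\cdot\overline{\mathbf{b}}$, producing $\mathbf{u}_1,\dots,\mathbf{u}_k$ with $\mathbf{u}_i=\mathbf{v}_i-\sum_{j<i}c_{ij}\mathbf{u}_j$ for scalars $c_{ij}\in\mathbf{C}$ (setting $c_{ij}=0$ whenever $\mathbf{u}_j=\mathbf{0}$, the only case needing a convention). An induction on $i$ shows the $\mathbf{u}_i$ are pairwise orthogonal, that $\operatorname{span}(\mathbf{u}_1,\dots,\mathbf{u}_i)=\operatorname{span}(\mathbf{v}_1,\dots,\mathbf{v}_i)$, and hence that $\mathbf{u}_i$ is the component of $\mathbf{v}_i$ orthogonal to $\operatorname{span}(\mathbf{v}_1,\dots,\mathbf{v}_{i-1})$. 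Since each $\mathbf{u}_i$ differs from $\mathbf{v}_i$ by a linear combination of $\mathbf{u}_1,\dots,\mathbf{u}_{i-1}$, multilinearity and alternation of the wedge give $\mathbf{v}_1\wedge\dots\wedge\mathbf{v}_k=\mathbf{u}_1\wedge\dots\wedge\mathbf{u}_k$. The Gramian of the $\mathbf{u}_i$ is diagonal, so Corollary~\ref{normofwedge} gives $|\mathbf{v}_1\wedge\dots\wedge\mathbf{v}_k|^2=\prod_{i=1}^k|\mathbf{u}_i|^2$. Finally, from $\mathbf{v}_i=\mathbf{u}_i+\sum_{j<i}c_{ij}\mathbf{u}_j$ and orthogonality of the $\mathbf{u}_j$, the Pythagorean theorem yields $|\mathbf{v}_i|^2=|\mathbf{u}_i|^2+\sum_{j<i}|c_{ij}|^2|\mathbf{u}_j|^2\ge|\mathbf{u}_i|^2$, and multiplying these over $i$ proves the inequality.

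For the equality statement I would track where $|\mathbf{u}_i|\le|\mathbf{v}_i|$ is sharp. If some $\mathbf{v}_i=\mathbf{0}$, both sides vanish. Otherwise all $|\mathbf{v}_i|>0$, so $\prod|\mathbf{u}_i|=\prod|\mathbf{v}_i|$ forces $|\mathbf{u}_i|=|\mathbf{v}_i|$ for each $i$, hence $c_{ij}|\mathbf{u}_j|=0$ for all $j<i$; an induction on $i$ then gives $\mathbf{v}_i\cdot\overline{\mathbf{v}}_j=0$ for all $i\ne j$. The converse is immediate, since for pairwise orthogonal vectors the process leaves them unchanged. I do not expect a serious obstacle here; the only point requiring a little care is the bookkeeping when the $\mathbf{v}_i$ are linearly dependent, so that some $\mathbf{u}_i$ vanish and the $c_{ij}$ are not uniquely determined — but a vanishing $\mathbf{u}_i$ only reinforces the inequality, and in the dependent case with all $\mathbf{v}_i\ne\mathbf{0}$ neither equality condition holds while the left side is $0$, consistent with strict inequality.
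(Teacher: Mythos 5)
Your argument is correct, and it is a genuinely different route from the one in the paper. You prove Hadamard's inequality for the Gramian by Gram--Schmidt: orthogonalize with respect to $\langle\mathbf{a},\mathbf{b}\rangle=\mathbf{a}\cdot\overline{\mathbf{b}}$, observe that the wedge product is unchanged, read off $|\mathbf{v}_1\wedge\dots\wedge\mathbf{v}_k|^2=\prod_i|\mathbf{u}_i|^2$ from the now-diagonal Gramian, and finish with the Pythagorean bound $|\mathbf{u}_i|\le|\mathbf{v}_i|$. The paper instead first normalizes to unit vectors, so that the Gramian is Hermitian positive semi-definite with trace $k$, and then applies the arithmetic--geometric mean inequality to its eigenvalues to get $\det\le 1$; equality forces all eigenvalues to equal $1$, i.e.\ the Gramian is the identity. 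The paper's route is shorter and sidesteps the bookkeeping you rightly flag (degenerate $\mathbf{u}_i$ and non-unique $c_{ij}$ when the $\mathbf{v}_i$ are dependent), since the eigenvalue argument is insensitive to rank. Your route buys a more constructive picture --- $|\mathbf{u}_i|$ is the height of $\mathbf{v}_i$ over the span of its predecessors, so the wedge norm is literally a volume --- and it localizes the equality analysis to the individual inequalities $|\mathbf{u}_i|\le|\mathbf{v}_i|$ rather than to the equality case of AM--GM. Your handling of the equality clause is sound: when no $\mathbf{v}_i$ vanishes, equality forces every $|\mathbf{u}_i|=|\mathbf{v}_i|>0$, hence every $c_{ij}=0$, so $\mathbf{v}_i=\mathbf{u}_i$ and the vectors are pairwise orthogonal; the converse and the degenerate cases are as you say.
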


\begin{proof}
If any of the vectors $\mathbf{v}_j$ are the zero vector, then the inequality
is obvious. So, assume that none of the $\mathbf{v}_j$ are zero. Let
$$
\mathbf{u}_j = \frac{\mathbf{v}_j}{|\mathbf{v}_j|}
$$
be unit vectors in the directions of the $\mathbf{v}_j.$ Then, clearly,
$$
|\mathbf{v}_1\wedge\dots\wedge\mathbf{v}_k|=
\Big||\mathbf{v}_1|\mathbf{u}_1\wedge\dots\wedge
|\mathbf{v}_k|\mathbf{u}_k\Big|=|\mathbf{v}_1|\cdot\dots\cdot|\mathbf{v}_k|
\cdot|\mathbf{u}_1\wedge\dots\wedge\mathbf{u}_k|.
$$
Thus, it suffices to show that
$|\mathbf{u}_1\wedge\dots\wedge\mathbf{u}_k| \le 1.$
To this end, by Corollary~\ref{normofwedge},
\begin{equation}\label{normofwedgeeqn}
|\mathbf{u}_1\wedge\dots\wedge\mathbf{u}_k|^2 =
\det (\mathbf{u}_i\cdot\overline{\mathbf{u}}_j).
\end{equation}
The matrix $(\mathbf{u}_i\cdot\overline{\mathbf{u}}_j)$ is
a $k\times k$ Hermitian matrix, and hence has non-negative eigenvalues
$\lambda_1,\dots,\lambda_k.$ Thus, by the geometric-arithmetic mean
inequality
$$
\det (\mathbf{u}_i\cdot\overline{\mathbf{u}}_j)=
\lambda_1\cdot\dots\cdot\lambda_k \le
\left[\frac{\lambda_1+\dots+\lambda_k}{k}\right]^k = 1,
$$
where the equality on the right follows from the fact that
$$
\lambda_1+\dots+\lambda_k=\mathrm{Trace}
(\mathbf{u}_i\cdot\overline{\mathbf{u}}_j)=k,
$$
since $\mathbf{u}_i\cdot\overline{\mathbf{u}}_i=1.$
\par\smallskip
Equality holds in the arithmetic-geometric mean inequality if and only if
all the eigenvalues are equal, and hence all equal to one. This is the 
case if and only if $(\mathbf{u}_i\cdot\overline{\mathbf{u}}_j)$ is
the $k\times k$ identity matrix, which happens if and only if
$\mathbf{v}_i\cdot\overline{\mathbf{v}}_j=0$ for all $i\ne j.$
\end{proof}

We will be most interested in the $n$-th exterior power of
$\mathbf{C}^{n+1},$ where 
$$
\mathbf{e}_1\wedge\dots\wedge\mathbf{e}_n, \quad\dots,\quad
\mathbf{e}_0\wedge\dots\wedge\mathbf{e}_{j-1}\wedge\mathbf{e}_{j+1}\wedge\dots
\wedge\mathbf{e}_n,\quad\dots\quad,
\mathbf{e}_0\wedge\dots\wedge\mathbf{e}_{n-1}
$$
form a basis of $\Lambda^{n}\mathbf{C}^{n+1}.$
Let $L$ denote the  isometric isomorphism  from
$\Lambda^{n}\mathbf{C}^{n+1}$ to $\mathbf{C}^{n+1}$
defined on the basis vectors as follows:
\begin{align*}
L(\mathbf{e}_1\wedge\dots\wedge\mathbf{e}_n)&=\mathbf{e}_0,\\
&\vdots\\
L(\mathbf{e}_0\wedge\dots\wedge\mathbf{e}_{j-1}\wedge\mathbf{e}_{j+1}\wedge\dots
\wedge\mathbf{e}_n)&=(-1)^j\mathbf{e}_j,\\
&\vdots\\
L(\mathbf{e}_0\wedge\dots\wedge\mathbf{e}_{n-1})&=(-1)^n\mathbf{e}_n.
\end{align*}
Observe that if $n=2$ and $\mathbf{a}$ and $\mathbf{b}$ are vectors in
$\mathbf{C}^3,$ then
$L(\mathbf{a}\wedge\mathbf{b})=\mathbf{a}\times\mathbf{b},$ where
the product on the right is the ordinary cross product in $\mathbf{C}^3.$
\par\smallskip
We will use $L(\mathbf{b}_1,\dots,\mathbf{b}_n)$ as a generalized
cross-product.
\begin{proposition}\label{boxproduct}
Let $\mathbf{a},$ $\mathbf{b}_1, \dots, \mathbf{b}_n$ be $n+1$
vectors in $\mathbf{C}^{n+1}.$ Then,
$$
\det(\mathbf{a},\mathbf{b}_1,\dots,\mathbf{b}_n)=\mathbf{a}\cdot
L(\mathbf{b}_1\wedge\dots\wedge\mathbf{b}_n).
$$
\end{proposition}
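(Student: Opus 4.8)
The plan is to exploit that both sides of the asserted identity are linear in $\mathbf{a}$ and multilinear and alternating in $\mathbf{b}_1,\dots,\mathbf{b}_n$, so it suffices to verify it on basis vectors; the crux is a single sign-bookkeeping identity relating $L$ to the wedge product. First I would record the key translation: for every $\mathbf{a}\in\mathbf{C}^{n+1}$ and every $\omega\in\Lambda^n\mathbf{C}^{n+1}$,
\begin{equation}\label{wedgetolbox}
\mathbf{a}\wedge\omega=\bigl(\mathbf{a}\cdot L(\omega)\bigr)\,
\mathbf{e}_0\wedge\mathbf{e}_1\wedge\dots\wedge\mathbf{e}_n .
\end{equation}
Since both sides of \eqref{wedgetolbox} are bilinear in $(\mathbf{a},\omega)$, it is enough to check it when $\mathbf{a}=\mathbf{e}_i$ and $\omega$ is one of the basis $n$-vectors $\mathbf{e}_0\wedge\dots\wedge\widehat{\mathbf{e}_j}\wedge\dots\wedge\mathbf{e}_n$, the hat indicating that $\mathbf{e}_j$ is omitted. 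On the right, the definition of $L$ gives $\mathbf{e}_i\cdot L(\mathbf{e}_0\wedge\dots\wedge\widehat{\mathbf{e}_j}\wedge\dots\wedge\mathbf{e}_n)=(-1)^j\,\mathbf{e}_i\cdot\mathbf{e}_j$, which is $(-1)^j$ if $i=j$ and $0$ otherwise. On the left, $\mathbf{e}_i\wedge\mathbf{e}_0\wedge\dots\wedge\widehat{\mathbf{e}_j}\wedge\dots\wedge\mathbf{e}_n$ vanishes when $i\ne j$ because a factor repeats, and when $i=j$ it equals $(-1)^j\,\mathbf{e}_0\wedge\mathbf{e}_1\wedge\dots\wedge\mathbf{e}_n$ after sliding $\mathbf{e}_j$ past the $j$ factors $\mathbf{e}_0,\dots,\mathbf{e}_{j-1}$. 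Hence the two sides of \eqref{wedgetolbox} agree.

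Next I would apply \eqref{wedgetolbox} with $\omega=\mathbf{b}_1\wedge\dots\wedge\mathbf{b}_n$, obtaining
$$
\mathbf{a}\wedge\mathbf{b}_1\wedge\dots\wedge\mathbf{b}_n
=\bigl(\mathbf{a}\cdot L(\mathbf{b}_1\wedge\dots\wedge\mathbf{b}_n)\bigr)\,
\mathbf{e}_0\wedge\dots\wedge\mathbf{e}_n .
$$
On the other hand, the standard identification of the top exterior power with scalars gives
$$
\mathbf{a}\wedge\mathbf{b}_1\wedge\dots\wedge\mathbf{b}_n
=\det(\mathbf{a},\mathbf{b}_1,\dots,\mathbf{b}_n)\,
\mathbf{e}_0\wedge\dots\wedge\mathbf{e}_n ,
$$
which is just the definition of the determinant as the action of a linear map on $\Lambda^{n+1}\mathbf{C}^{n+1}$ (equivalently, expand each vector in the basis and use multilinearity and antisymmetry of $\wedge$). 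Comparing the coefficients of the nonzero vector $\mathbf{e}_0\wedge\dots\wedge\mathbf{e}_n$ in these two expressions yields $\det(\mathbf{a},\mathbf{b}_1,\dots,\mathbf{b}_n)=\mathbf{a}\cdot L(\mathbf{b}_1\wedge\dots\wedge\mathbf{b}_n)$, as claimed.

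The only genuine work is the sign computation in \eqref{wedgetolbox}: matching the $(-1)^j$ built into the definition of $L$ against the $(-1)^j$ produced by moving $\mathbf{e}_j$ to the front of the wedge, with the indexing convention that vectors and coordinates are labeled $0$ through $n$. No input beyond the linear-algebra setup preceding Proposition~\ref{dotofwedge} is needed. An essentially equivalent alternative would be to expand $\det(\mathbf{e}_j,\mathbf{b}_1,\dots,\mathbf{b}_n)$ along its first column, identify the resulting signed minor with the coefficient of $\mathbf{e}_0\wedge\dots\wedge\widehat{\mathbf{e}_j}\wedge\dots\wedge\mathbf{e}_n$ in $\mathbf{b}_1\wedge\dots\wedge\mathbf{b}_n$, and then apply $L$; I prefer the wedge-product version above since it sidesteps cofactor-expansion index juggling.
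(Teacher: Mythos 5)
Your proof is correct, but it takes a different route from the paper's. The paper disposes of this proposition in one sentence: the right-hand side is, term by term, the cofactor expansion of $\det(\mathbf{a},\mathbf{b}_1,\dots,\mathbf{b}_n)$ along its first row, the $(-1)^j$ in the definition of $L$ being exactly the cofactor signs. You instead isolate the identity
$\mathbf{a}\wedge\omega=\bigl(\mathbf{a}\cdot L(\omega)\bigr)\,\mathbf{e}_0\wedge\dots\wedge\mathbf{e}_n$
as the key lemma, verify it on basis vectors (your sign check --- moving $\mathbf{e}_j$ past $\mathbf{e}_0,\dots,\mathbf{e}_{j-1}$ costs $(-1)^j$, matching the $(-1)^j$ in the definition of $L$ --- is the one place where something could go wrong, and it is right), and then invoke the standard identification of $\mathbf{a}\wedge\mathbf{b}_1\wedge\dots\wedge\mathbf{b}_n$ with $\det(\mathbf{a},\mathbf{b}_1,\dots,\mathbf{b}_n)$ times the top form. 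You even name the paper's argument as the ``essentially equivalent alternative'' you chose not to follow. What your version buys is a self-contained verification of the sign conventions baked into $L$ and a reusable structural fact ($L$ is, up to the metric identification, the Hodge-type isomorphism characterized by that wedge identity); what the paper's version buys is brevity, at the cost of asking the reader to match the signs in the definition of $L$ against the cofactor signs without comment. Both are complete and correct.
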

\begin{proof}
If we compute the determinant of the $(n+1)\times(n+1)$ matrix whose rows
are $\mathbf{a},$ $\mathbf{b}_1,\dots,\mathbf{b}_n,$ then the expression
on the right is nothing other than the computation of the determinant by
expansion of minors along the first row.
\end{proof}
\begin{cor}The vector $L(\mathbf{b}_1,\dots,\mathbf{b}_n)$ is orthogonal
to each of the $\mathbf{b}_j.$
\end{cor}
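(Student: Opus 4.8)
The plan is simply to feed one of the $\mathbf{b}_j$ into the slot occupied by $\mathbf{a}$ in Proposition~\ref{boxproduct}. Fix an index $j$ with $1\le j\le n$. Applying Proposition~\ref{boxproduct} with $\mathbf{a}=\mathbf{b}_j$ gives
$$
\mathbf{b}_j\cdot L(\mathbf{b}_1\wedge\dots\wedge\mathbf{b}_n)
=\det(\mathbf{b}_j,\mathbf{b}_1,\dots,\mathbf{b}_n).
$$
The $(n+1)\times(n+1)$ matrix on the right has $\mathbf{b}_j$ as both its first row and its $(j+1)$-st row, so it has two equal rows and its determinant vanishes. Hence $\mathbf{b}_j\cdot L(\mathbf{b}_1\wedge\dots\wedge\mathbf{b}_n)=0$, which is exactly the assertion that $L(\mathbf{b}_1\wedge\dots\wedge\mathbf{b}_n)$ is orthogonal to $\mathbf{b}_j$ with respect to the bilinear dot product in use here. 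Since $j$ was arbitrary, this holds for every $\mathbf{b}_j$.

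There is essentially no obstacle: the only thing to watch is that ``orthogonal'' is being understood relative to the symmetric bilinear form $\mathbf{a}\cdot\mathbf{b}$ rather than the Hermitian inner product, so no conjugation enters and the vanishing-determinant argument applies verbatim. One could also phrase the same computation invariantly by noting $\mathbf{b}_j\wedge\mathbf{b}_1\wedge\dots\wedge\mathbf{b}_n=0$ in $\Lambda^{n+1}\mathbf{C}^{n+1}$, but the row-repetition version is the most direct and is all that is needed.
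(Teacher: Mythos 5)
Your proof is correct and is exactly the argument the paper intends (the corollary is stated without proof, but the repeated-row determinant computation via Proposition~\ref{boxproduct} is the evident justification). Your remark that orthogonality here is with respect to the bilinear dot product, not the Hermitian one, is a worthwhile clarification consistent with the paper's conventions.
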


We define an equivalence relation on $\mathbf{C}^{n+1}\setminus\{0\}$
by declaring that two non-zero vectors $\mathbf{v}$ and $\mathbf{w}$ in 
$\mathbf{C}^{n+1}$ are equivalent if there exists a non-zero complex scalar
$c$ such that $\mathbf{v}=c\mathbf{w}.$ The set of all such equivalence
classes is denoted by $\mathbf{CP}^n$ and is called the 
\textit{complex projective space} of dimension $n.$ A point in $\mathbf{CP}^n$ is an equivalence class
of vectors in $\mathbf{C}^{n+1}$ and by the definition of the equivalence
relation, we can always represent a point in $\mathbf{CP}^n$ by a unit
vector in $\mathbf{C}^{n+1}.$ The equivalence classes associated with
the vectors in a $k+1$ dimensional subspace of $\mathbf{C}^{n+1}$ is
a $k$-dimensional subspace of $\mathbf{CP}^n.$ When $k=n-1,$ such a subspace
is called a hyperplane in $\mathbf{CP}^n.$ We say that $n+1$ points in
$\mathbf{CP}^n$ are in \textit{general position} if they are not all
contained in any one hyperplane. This is equivalent to the vectors representing
the points being linearly independent in $\mathbf{C}^{n+1}.$
Similarly, we say that $n+1$ hyperplanes in $\mathbf{CP}^n$ are in
\textit{general position} if there is no point in $\mathbf{CP}^n$ contained
in all the hyperplanes. Note that a non-zero vector $\mathbf{v}$ in
$\mathbf{C}^{n+1}$ can be thought of as representing a hyperplane where the
points in the hyperplane are represented by the vectors $\mathbf{x}$ in
$\mathbf{C}^{n+1}$ such that $\mathbf{v}\cdot\mathbf{x}=0.$
\par\smallskip
If $\mathbf{v}$ and $\mathbf{w}$ are two unit vectors in 
$\mathbf{C}^{n+1}$ representing points in $\mathbf{CP}^n,$ then the
\textit{Fubini-Study distance} between the two points is defined to be
$|\mathbf{v}\wedge\mathbf{w}|.$ Now let $\mathbf{u}$ and $\mathbf{v}$
be unit vectors in $\mathbf{C}^{n+1}.$ We think of $\mathbf{u}$
as representing a point in $\mathbf{CP}^n$ and $\mathbf{v}$ as representing
a hyperplane in $\mathbf{CP}^n.$ Then, The Fubini-Study distance from
the point represented by $\mathbf{u}$ to the hyperplane represented
by $\mathbf{v}$ is defined by
\begin{align*}
\textnormal{distance from the point~}\mathbf{u}
&\textnormal{~to the hyperplane~}\mathbf{v}\\
&=\min\{\textnormal{distance from~}\mathbf{u}\textnormal{~to~}\mathbf{x} :
\mathbf{v}\cdot\mathbf{x}=0\textnormal{~and~}|\mathbf{x}|=1\}\\
&=\min\{|\mathbf{u}\wedge\mathbf{x}| : \mathbf{v}\cdot\mathbf{x}=0
\textnormal{~and~}|\mathbf{x}|=1\}.
\end{align*}
\par\smallskip
Second perhaps only to hyperbolic geometry, projective geometry, which arose
out of the study of perspective in classical painting, is among the 
most ubiquitous of the non-Euclidean geometries encountered in 
modern mathematics. See, for instance \cite{RichterGebert} for a recent
accessible introduction.
\par\smallskip
Our first result is a convenient formula for the distance from a vertex of
a projective simplex to the hyperplane determined by the opposite face in
the simplex.

\begin{proposition}\label{disttoopposite}
Let $\mathbf{a},$ $\mathbf{b}_1, \dots, \mathbf{b}_n$ be $n+1$ linearly independent unit vectors in $\mathbf{C}^{n+1}$ representing $n+1$ points in general position in $\mathbf{CP}^n.$ Then, the Fubini-Study distance $d$ from the point $\mathbf{a}$ to the hyperplane in $\mathbf{CP}^n$ spanned by $\mathbf{b}_1,\dots,\mathbf{b}_n$ is given by
$$
d=\frac{|\det(\mathbf{a},\mathbf{b}_1,\dots,\mathbf{b}_n)|}{|\mathbf{b}_1\wedge\dots\wedge\mathbf{b}_n|}.
$$
\end{proposition}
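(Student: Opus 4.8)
The plan is to compute the distance by minimizing $|\mathbf{a}\wedge\mathbf{x}|$ over unit vectors $\mathbf{x}$ orthogonal (in the sense $\mathbf{v}\cdot\mathbf{x}=0$, i.e.\ $\overline{\mathbf{v}}\cdot\mathbf{x}=0$ after passing to the Hermitian condition) to the hyperplane. First I would identify the hyperplane spanned by $\mathbf{b}_1,\dots,\mathbf{b}_n$ with the hyperplane represented by the single vector $\mathbf{w}=\overline{L(\mathbf{b}_1\wedge\dots\wedge\mathbf{b}_n)}$: indeed, by the Corollary to Proposition~\ref{boxproduct}, $L(\mathbf{b}_1\wedge\dots\wedge\mathbf{b}_n)$ is orthogonal to each $\mathbf{b}_j$ under the dot product, so each $\mathbf{b}_j$ satisfies $\mathbf{w}\cdot\overline{\mathbf{b}_j}=0$; thus the $\mathbf{b}_j$ span exactly the Hermitian-orthogonal complement of $\mathbf{w}$, which is the hyperplane whose points $\mathbf{x}$ satisfy $\mathbf{x}\cdot\overline{\mathbf{w}}=0$, equivalently $\mathbf{x}\cdot L(\mathbf{b}_1\wedge\dots\wedge\mathbf{b}_n)=0$. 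By Proposition~\ref{boxproduct} this last condition is $\det(\mathbf{x},\mathbf{b}_1,\dots,\mathbf{b}_n)=0$, which is visibly the condition that $\mathbf{x}$ lie in the span of the $\mathbf{b}_j$.

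Next I would reduce to a clean linear-algebra minimization. Write $\mathbf{a}=\alpha\mathbf{u}+\boldsymbol{\beta}$ where $\mathbf{u}=\mathbf{w}/|\mathbf{w}|$ is the unit normal, $\alpha=\mathbf{a}\cdot\overline{\mathbf{u}}$, and $\boldsymbol{\beta}$ lies in the hyperplane $H=\mathbf{u}^\perp$. For any unit $\mathbf{x}\in H$, the Hermitian Gram matrix of $\{\mathbf{a},\mathbf{x}\}$ gives, via Corollary~\ref{normofwedge},
$$
|\mathbf{a}\wedge\mathbf{x}|^2=|\mathbf{a}|^2|\mathbf{x}|^2-|\mathbf{a}\cdot\overline{\mathbf{x}}|^2=1-|\boldsymbol{\beta}\cdot\overline{\mathbf{x}}|^2,
$$
using $\mathbf{a}\cdot\overline{\mathbf{x}}=\boldsymbol{\beta}\cdot\overline{\mathbf{x}}$ since $\mathbf{x}\perp\mathbf{u}$. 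Minimizing $|\mathbf{a}\wedge\mathbf{x}|$ thus means maximizing $|\boldsymbol{\beta}\cdot\overline{\mathbf{x}}|$ over unit $\mathbf{x}\in H$; by Cauchy--Schwarz the maximum is $|\boldsymbol{\beta}|$, attained at $\mathbf{x}=\boldsymbol{\beta}/|\boldsymbol{\beta}|$ (and $\boldsymbol{\beta}\ne 0$ because $\mathbf{a}$ is not in $H$, as the vectors are linearly independent). Hence $d^2=1-|\boldsymbol{\beta}|^2=|\alpha|^2=|\mathbf{a}\cdot\overline{\mathbf{u}}|^2$, so $d=|\mathbf{a}\cdot\overline{\mathbf{w}}|/|\mathbf{w}|=|\mathbf{a}\cdot L(\mathbf{b}_1\wedge\dots\wedge\mathbf{b}_n)|/|L(\mathbf{b}_1\wedge\dots\wedge\mathbf{b}_n)|$, where I use that $L$ is an isometry and that $\mathbf{a}\cdot\overline{\mathbf{w}}=\mathbf{a}\cdot L(\mathbf{b}_1\wedge\dots\wedge\mathbf{b}_n)$.

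Finally I would substitute the two formulas already available: the numerator $\mathbf{a}\cdot L(\mathbf{b}_1\wedge\dots\wedge\mathbf{b}_n)=\det(\mathbf{a},\mathbf{b}_1,\dots,\mathbf{b}_n)$ by Proposition~\ref{boxproduct}, and the denominator $|L(\mathbf{b}_1\wedge\dots\wedge\mathbf{b}_n)|=|\mathbf{b}_1\wedge\dots\wedge\mathbf{b}_n|$ since $L$ is an isometric isomorphism. This yields the claimed identity. The one point that needs genuine care is the bookkeeping between the dot product used to represent hyperplanes ($\mathbf{v}\cdot\mathbf{x}=0$) and the Hermitian inner product that governs norms and the Fubini--Study metric; the cross-product vector $L(\mathbf{b}_1\wedge\dots\wedge\mathbf{b}_n)$ is dot-orthogonal to the $\mathbf{b}_j$, so its conjugate is the correct Hermitian normal, and keeping the conjugations straight is the main thing to get right — everything else is Cauchy--Schwarz and the previously established wedge identities.
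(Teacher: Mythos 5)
Your proof is correct and rests on the same underlying mechanism as the paper's --- reduce the minimization to the Hermitian-orthogonal projection of $\mathbf{a}$ onto the normal line of the hyperplane, identify that line with the conjugate of $L(\mathbf{b}_1\wedge\dots\wedge\mathbf{b}_n)$, and finish with Proposition~\ref{boxproduct} and the isometry of $L$ --- but the execution is genuinely different. The paper makes an orthogonal change of coordinates placing $\mathbf{e}_0$ normal to the span of the $\mathbf{b}_j$, expands $\mathbf{a}\wedge\mathbf{u}$ in the wedge basis, drops the terms not involving $\mathbf{e}_0$ to get the lower bound $|\mathbf{a}\wedge\mathbf{u}|\ge|a_0|$, and then exhibits a minimizing $\mathbf{u}$. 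You instead work coordinate-free, using the $2\times2$ Gramian identity $|\mathbf{a}\wedge\mathbf{x}|^2=1-|\mathbf{a}\cdot\overline{\mathbf{x}}|^2$ from Corollary~\ref{normofwedge} together with Cauchy--Schwarz. Your route has the advantage of making explicit the bookkeeping between the bilinear dot product (under which $L(\mathbf{b}_1\wedge\dots\wedge\mathbf{b}_n)$ is orthogonal to the $\mathbf{b}_j$) and the Hermitian inner product (which governs the Fubini--Study distance); the paper's coordinate argument passes over this conjugation issue silently, and you are right that it is the main thing to get straight.

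One small slip: you assert $\boldsymbol{\beta}\ne0$ ``because $\mathbf{a}$ is not in $H$.'' That implication runs the wrong way: linear independence forces $\alpha\ne0$, while $\boldsymbol{\beta}$ can perfectly well vanish (take $\mathbf{a}=\mathbf{e}_0$ and $\mathbf{b}_j=\mathbf{e}_j$; this is exactly the degenerate case the paper treats separately at the end of its proof). Fortunately nothing breaks: when $\boldsymbol{\beta}=0$ the maximum of $|\boldsymbol{\beta}\cdot\overline{\mathbf{x}}|$ over unit $\mathbf{x}\in H$ is $0=|\boldsymbol{\beta}|$, attained by every such $\mathbf{x}$, so $d^2=1-|\boldsymbol{\beta}|^2=|\alpha|^2$ still holds; only your particular maximizer $\boldsymbol{\beta}/|\boldsymbol{\beta}|$ is undefined. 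Either handle that case separately or observe that $\max\{|\boldsymbol{\beta}\cdot\overline{\mathbf{x}}| : \mathbf{x}\in H,\ |\mathbf{x}|=1\}=|\boldsymbol{\beta}|$ is valid for $\boldsymbol{\beta}=0$ as well.
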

\begin{proof}
Without loss of generality, by making an orthogonal change of coordinates,
we may choose our standard basis vectors 
$\mathbf{e}_0, \dots, \mathbf{e}_n$ in $\mathbf{C}^{n+1}$ so that
$\mathbf{e}_0\cdot \mathbf{b}_j=0$ for $j=1,\dots,n.$ Let $\mathbf{u}$
be a unit vector in the span of $\{\mathbf{b}_1,\dots,\mathbf{b}_n\}.$
Then,
$$
\mathbf{u}=u_1\mathbf{e}_1+\dots+u_n\mathbf{e}_n \qquad\textnormal{with}\qquad
|u_1|^2+\dots+|u_n|^2=1.
$$
Let $\mathbf{a}=a_0\mathbf{e}_0+\dots+a_n\mathbf{e}_n.$
Then, the Fubini-Study distance from the point in $\mathbf{CP}^n$ represented
by $\mathbf{a}$ to the point in $\mathbf{CP}^n$ represented by $\mathbf{u}$
is given by $|\mathbf{a}\wedge\mathbf{u}|.$ Note that
\begin{equation}\label{awedgeueq}
\mathbf{a}\wedge\mathbf{u} = a_0u_1 \mathbf{e}_0\wedge\mathbf{e}_1
+ \dots + a_0u_n \mathbf{e}_0\wedge\mathbf{e}_n+\sum_{1\le i < j \le n}
(a_iu_j-a_ju_i)\mathbf{e}_i\wedge\mathbf{e}_j.
\end{equation}
Hence,
\begin{equation}\label{awedgeuge}
|\mathbf{a}\wedge\mathbf{u}|^2\ge|a_0u_1|^2+\dots+|a_0u_n|^2=|a_0|^2(|u_1|^2+\dots+|u_n|^2)=|a_0|^2.
\end{equation}
Now,
$$
\det(\mathbf{a},\mathbf{b}_1,\dots,\mathbf{b}_n)=\mathbf{a}\cdot
L(\mathbf{b}_1\wedge\dots\wedge\mathbf{b}_n)
$$
by Proposition~\ref{boxproduct}.
Of course, $L(\mathbf{b}_1\wedge\dots\wedge\mathbf{b}_n)$ is orthogonal
to each of the $\mathbf{b}_j.$  By our choice of basis, $\mathbf{e}_0$ is 
also orthogonal to each of the $\mathbf{b}_j.$ Since the $\mathbf{b}_j$
form a set of $n$ linearly independent vectors in an $n+1$-dimensional
vector space, there is only one direction simultaneously orthogonal to
all of the $\mathbf{b}_j.$ Thus,
$L(\mathbf{b}_1\wedge\dots\wedge\mathbf{b}_n)$
is in the span of $\mathbf{e}_0,$ and so
$$
|\mathbf{a}\cdot L(\mathbf{b}_1\wedge\dots\wedge\mathbf{b}_n)|
= |a_0|\cdot|L(\mathbf{b}_1\wedge\dots\wedge\mathbf{b}_n)|.
$$
Thus, observing that 
$$
|L(\mathbf{b}_1\wedge\dots\wedge\mathbf{b}_n)|=
|\mathbf{b}_1\wedge\dots\wedge\mathbf{b}_n|,
$$
we see from $(\ref{awedgeuge})$ that
\begin{align*}
|\mathbf{a}\wedge\mathbf{u}|\ge|a_0|&=\frac{|a_0|\cdot|L(\mathbf{b}_1\wedge\dots\wedge\mathbf{b}_n)|}{|\mathbf{b}_1\wedge\dots\wedge\mathbf{b}_n|}\\
&= \frac{|\mathbf{a}\cdot L(\mathbf{b}_1\wedge\dots\wedge\mathbf{b}_n)|}{|\mathbf{b}_1\wedge\dots\wedge\mathbf{b}_n|}\\
&=\frac{|\det(\mathbf{a},\mathbf{b}_1,\dots,\mathbf{b}_n)|}{|\mathbf{b}_1\wedge\dots\wedge\mathbf{b}_n|}.
\end{align*}
\par
To complete the proof, we need to show that equality
is obtained for some choice of $\mathbf{u}.$ There are two cases.
If $\mathbf{a}$ is the direction of $\mathbf{e}_0,$ then equality holds for
any choice of $\mathbf{u}$ since $a_1=\dots=a_n=0.$ Otherwise, if we choose
$$
u_j = \frac{a_j}{\sqrt{|a_1|^2+\dots+|a_n|^2}},\qquad\textnormal{for~}j=1,\dots,
n,
$$
we see that the terms in the sum on the far right of $(\ref{awedgeueq})$
are all zero, and so equality holds in $(\ref{awedgeuge}).$
\end{proof}
\begin{cor}\label{distgedet}
Let $\mathbf{a},$ $\mathbf{b}_1, \dots, \mathbf{b}_n$ and $d$
be as in Proposition~\ref{disttoopposite}. Then,
$$
d\ge\det(\mathbf{a},\mathbf{b}_1,\dots,\mathbf{b}_n).
$$
Equality holds if and only if $\mathbf{b}_i\cdot\overline{\mathbf{b}}_j=0$
for all $i\ne j.$
\end{cor}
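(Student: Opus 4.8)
The plan is to combine the explicit formula of Proposition~\ref{disttoopposite} with the Hadamard-type inequality of Corollary~\ref{normofwedgeineq}; no new ideas are needed beyond those two facts. By Proposition~\ref{disttoopposite},
$$
d=\frac{|\det(\mathbf{a},\mathbf{b}_1,\dots,\mathbf{b}_n)|}{|\mathbf{b}_1\wedge\dots\wedge\mathbf{b}_n|},
$$
so it suffices to show the denominator is at most $1$. First I would apply Corollary~\ref{normofwedgeineq} to the $n$ vectors $\mathbf{b}_1,\dots,\mathbf{b}_n$, obtaining $|\mathbf{b}_1\wedge\dots\wedge\mathbf{b}_n|\le|\mathbf{b}_1|\cdots|\mathbf{b}_n|$; since each $\mathbf{b}_j$ is a unit vector, the right-hand side equals $1$. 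Dividing gives $d\ge|\det(\mathbf{a},\mathbf{b}_1,\dots,\mathbf{b}_n)|$.

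For the equality characterization I would simply trace through the equality clause of Corollary~\ref{normofwedgeineq}. Since the only inequality used is $|\mathbf{b}_1\wedge\dots\wedge\mathbf{b}_n|\le 1$, equality $d=|\det(\mathbf{a},\mathbf{b}_1,\dots,\mathbf{b}_n)|$ holds if and only if $|\mathbf{b}_1\wedge\dots\wedge\mathbf{b}_n|=|\mathbf{b}_1|\cdots|\mathbf{b}_n|=1$. The $\mathbf{b}_j$ being unit vectors, none of them is the zero vector, so by the equality statement in Corollary~\ref{normofwedgeineq} this occurs precisely when $\mathbf{b}_i\cdot\overline{\mathbf{b}}_j=0$ for all $i\ne j$, which is the asserted condition.

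There is essentially no obstacle in this argument; the only point worth a word of care is that the linear independence of $\mathbf{a},\mathbf{b}_1,\dots,\mathbf{b}_n$ inherited from Proposition~\ref{disttoopposite} forces $\mathbf{b}_1\wedge\dots\wedge\mathbf{b}_n\ne 0$, so that the quotient in the formula for $d$ is well defined and the division step above is legitimate. (Here $\det(\mathbf{a},\mathbf{b}_1,\dots,\mathbf{b}_n)$ in the statement is of course understood as its absolute value, matching the formula in Proposition~\ref{disttoopposite}.)
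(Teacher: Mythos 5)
Your proposal is correct and follows essentially the same route as the paper: apply Corollary~\ref{normofwedgeineq} to conclude $|\mathbf{b}_1\wedge\dots\wedge\mathbf{b}_n|\le 1$, divide in the formula of Proposition~\ref{disttoopposite}, and read off the equality case from the equality clause of Corollary~\ref{normofwedgeineq}. Your added remarks (nonvanishing of the wedge product and the implicit absolute value on the determinant) are sensible clarifications but do not change the argument.
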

\begin{example}\label{isosceles}
When $n=3,$ let $0<s\le1$ and consider the projective triangle with vertices represented
by the unit vectors
$$
\mathbf{a}=\left[\sqrt{\frac{1-s^2}{2}},\sqrt{\frac{1-s^2}{2}},s\right],\qquad
\mathbf{b}_1=[1,0,0],\qquad\textnormal{and}\qquad
\mathbf{b}_2=[0,1,0].
$$
Then, $|\mathbf{b}_1\wedge\mathbf{b}_2|=1,$ and so
$d=\det(\mathbf{a},\mathbf{b}_1,\mathbf{b_2})=s,$ and equality holds
in Corollary~\ref{distgedet}. We remark that geometrically, these triangles
are isosceles with projective side lengths:
$$
1, \sqrt{\frac{1+s^2}{2}}, \sqrt{\frac{1+s^2}{2}}.
$$
\end{example}
\begin{proof}[Proof of Corollary~\ref{distgedet}.]
By Corollary~\ref{normofwedgeineq},
$|\mathbf{b}_1\wedge\dots\wedge\mathbf{b}_n|\le 1.$ Hence, by the formula
for $d$ in Proposition~\ref{disttoopposite},
$$
d=\frac{\det(\mathbf{a},\mathbf{b}_1,\dots,\mathbf{b}_n)}
{|\mathbf{b}_1\wedge\dots\wedge\mathbf{b}_n|}\ge
\det(\mathbf{a},\mathbf{b}_1,\dots,\mathbf{b}_n).
$$
Equality holds if and only if equality holds in
Corollary~\ref{normofwedgeineq}.
\end{proof}

\begin{proposition}\label{lagrange}
Let $\mathbf{v}_1,\dots,\mathbf{v}_{n-1}$ be $n-1$ linearly independent vectors
in $\mathbf{C}^{n+1}$ and let $\mathbf{w}_1,\dots,\mathbf{w}_n$ be $n$ linearly
independent vectors in $\mathbf{C}^{n+1}.$ If we let
$$
\mathbf{a}=L(\mathbf{w}_1\wedge\dots\wedge\mathbf{w}_n)\qquad
\textnormal{and}\qquad
\mathbf{b}=L(\mathbf{v}_1\wedge\dots\wedge\mathbf{v}_{n-1}\wedge\mathbf{a})
$$
Then
\begin{equation*}
\mathbf{b}= (-1)^n
\det\left(\begin{array}{ccc}
\mathbf{w}_1&\dots&\mathbf{w}_n\\
\mathbf{v}_1\cdot\mathbf{w}_1&\dots&
\mathbf{v}_1\cdot\mathbf{w}_n\\
\vdots&\vdots&\vdots\\
\mathbf{v}_{n-1}\cdot\mathbf{w}_1&\dots&
\mathbf{v}_{n-1}\cdot\mathbf{w}_n
\end{array}
\right).
\end{equation*}
\end{proposition}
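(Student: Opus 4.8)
The plan is to verify the identity by pairing both sides against an arbitrary vector $\mathbf{x} \in \mathbf{C}^{n+1}$ and using Proposition~\ref{boxproduct} to convert box products into determinants. By Proposition~\ref{boxproduct}, $\mathbf{x} \cdot \mathbf{b} = \det(\mathbf{x}, \mathbf{v}_1, \dots, \mathbf{v}_{n-1}, \mathbf{a})$, a determinant of an $(n+1)\times(n+1)$ matrix whose last row is $\mathbf{a} = L(\mathbf{w}_1 \wedge \dots \wedge \mathbf{w}_n)$. Expanding that determinant along its last row, each entry is $\mathbf{a} \cdot \mathbf{e}_k$, which by Proposition~\ref{boxproduct} again (with the roles reversed) equals $\pm\det(\mathbf{e}_k, \mathbf{w}_1, \dots, \mathbf{w}_n)$, i.e.\ up to sign the $k$-th coordinate of $\mathbf{a}$. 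So $\mathbf{x} \cdot \mathbf{b}$ becomes a bilinear-in-the-$\mathbf{w}$'s, linear-in-$\mathbf{x}$, linear-in-each-$\mathbf{v}$ expression, and I will show it equals $(-1)^n$ times the determinant obtained from the displayed $n\times n$ matrix after replacing the top row $(\mathbf{w}_1,\dots,\mathbf{w}_n)$ by the scalars $(\mathbf{x}\cdot\mathbf{w}_1, \dots, \mathbf{x}\cdot\mathbf{w}_n)$. Since this holds for every $\mathbf{x}$, the vector identity follows.

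Concretely, I would first reduce to a normalized case: both sides are multilinear and alternating in $\mathbf{w}_1, \dots, \mathbf{w}_n$, so by expanding each $\mathbf{w}_i$ in the standard basis it suffices to check the identity when $\{\mathbf{w}_1, \dots, \mathbf{w}_n\} = \{\mathbf{e}_{j_0}^{\,c}\}$ is an $n$-element subset of the standard basis vectors, say omitting $\mathbf{e}_m$; one may even further use the $\mathrm{GL}$-equivariance to assume $\mathbf{w}_i = \mathbf{e}_i$ for $i=1,\dots,n$, so that $m=0$. In that case $\mathbf{a} = L(\mathbf{e}_1 \wedge \dots \wedge \mathbf{e}_n) = \mathbf{e}_0$, the box-product computation of $\mathbf{x}\cdot\mathbf{b} = \det(\mathbf{x},\mathbf{v}_1,\dots,\mathbf{v}_{n-1},\mathbf{e}_0)$ can be done by expanding along the last row (only the $\mathbf{e}_0$-entry survives), and one checks directly that this matches $(-1)^n\det$ of the matrix whose first row is $(\mathbf{x}\cdot\mathbf{e}_1,\dots,\mathbf{x}\cdot\mathbf{e}_n) = (x_1,\dots,x_n)$ and whose lower block is $(\mathbf{v}_i\cdot\mathbf{e}_j)_{i,j} = (v_{i,j})$. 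Reorganizing the rows and the column containing $\mathbf{e}_0$ in the $(n+1)\times(n+1)$ determinant produces exactly the sign $(-1)^n$ and the claimed $n\times n$ determinant, completing the normalized case.

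An alternative, more conceptual route avoids coordinates entirely: observe that $\mathbf{x}\cdot\mathbf{b}$, as a function of $\mathbf{x}$, is linear and vanishes whenever $\mathbf{x}$ lies in the span of $\mathbf{v}_1,\dots,\mathbf{v}_{n-1},\mathbf{a}$ (since $\mathbf{b}$ is orthogonal to each of those by the Corollary to Proposition~\ref{boxproduct}); the right-hand side, paired with $\mathbf{x}$, is also linear in $\mathbf{x}$ and visibly vanishes when $\mathbf{x} = \mathbf{v}_i$ (a repeated row) and when $\mathbf{x} = \mathbf{a}$ (because $\mathbf{a}\cdot\mathbf{w}_j = L(\mathbf{w}_1\wedge\dots\wedge\mathbf{w}_n)\cdot\mathbf{w}_j = 0$ makes the top row zero). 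Hence both linear functionals vanish on the same hyperplane and therefore agree up to a scalar; evaluating that scalar at one convenient $\mathbf{x}$ — e.g.\ $\mathbf{x} = \mathbf{a}$ itself after a limiting/rank argument, or more cleanly the normalized case above — pins it down to $(-1)^n$.

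The main obstacle I anticipate is bookkeeping of signs: the definition of $L$ carries the factors $(-1)^j$, each application of Proposition~\ref{boxproduct} and each cofactor expansion introduces its own signs, and moving the special column past $n-1$ rows of $\mathbf{v}$'s contributes a further $(-1)^{n-1}$ or so. Getting the cumulative sign to collapse to exactly $(-1)^n$ rather than $(-1)^{n+1}$ or $\pm 1$ will require care; the cleanest safeguard is to check the small case $n=2$ by hand — there $L(\mathbf{w}_1\wedge\mathbf{w}_2) = \mathbf{w}_1\times\mathbf{w}_2$ and $\mathbf{b} = (\mathbf{v}_1\times(\mathbf{w}_1\times\mathbf{w}_2))$ is the classical BAC--CAB triple product $\mathbf{w}_1(\mathbf{v}_1\cdot\mathbf{w}_2) - \mathbf{w}_2(\mathbf{v}_1\cdot\mathbf{w}_1)$, which is indeed $(-1)^2\det\begin{pmatrix}\mathbf{w}_1 & \mathbf{w}_2\\ \mathbf{v}_1\cdot\mathbf{w}_1 & \mathbf{v}_1\cdot\mathbf{w}_2\end{pmatrix}$ — and then to track the sign's dependence on $n$ through exactly one cofactor expansion. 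Everything else is routine multilinear algebra.
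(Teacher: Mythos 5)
Your main line (the first two paragraphs) is correct and would yield a complete proof, and it shares its opening move with the paper: both dualize against an arbitrary vector $\mathbf{z}$ and use Proposition~\ref{boxproduct} to write $\mathbf{z}\cdot\mathbf{b}=\det(\mathbf{z},\mathbf{v}_1,\dots,\mathbf{v}_{n-1},\mathbf{a}).$ After that the routes genuinely differ. The paper stays coordinate-free: it cyclically permutes $\mathbf{a}$ to the front of the determinant (this one step is the entire source of the factor $(-1)^n$), applies Proposition~\ref{boxproduct} a second time to get $(-1)^n\,\mathbf{a}\cdot L(\mathbf{z}\wedge\mathbf{v}_1\wedge\dots\wedge\mathbf{v}_{n-1}),$ invokes the fact that $L$ is an isometry to rewrite this as $(-1)^n(\mathbf{w}_1\wedge\dots\wedge\mathbf{w}_n)\cdot(\mathbf{z}\wedge\mathbf{v}_1\wedge\dots\wedge\mathbf{v}_{n-1}),$ and then the Gramian identity of Proposition~\ref{dotofwedge} produces exactly the determinant of dot products. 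That disposes of the sign bookkeeping you were (rightly) worried about in a single transposition count and requires no normalization. Your route --- reduce by multilinearity and antisymmetry in the $\mathbf{w}_i$ to the case $\mathbf{w}_i=\mathbf{e}_i,$ then expand $\det(\mathbf{z},\mathbf{v}_1,\dots,\mathbf{v}_{n-1},\mathbf{e}_0)$ along its last row --- is more elementary in that it uses only Proposition~\ref{boxproduct} and not Proposition~\ref{dotofwedge}, and your cofactor sign $(-1)^{(n+1)+1}=(-1)^n$ does come out right; the cost is the extra reduction step and a coordinate computation. Two cautions. First, the reduction from an arbitrary basis subset to $\{\mathbf{e}_1,\dots,\mathbf{e}_n\}$ is not a matter of $\mathrm{GL}$-equivariance: the dot product and the map $L$ are equivariant only under the complex orthogonal group (and $L$ picks up a factor of the determinant). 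Since you only need a permutation matrix, which is orthogonal, the reduction survives, but the justification should say so. Second, your ``conceptual'' alternative has a real gap: the two linear functionals vanish on a common \emph{hyperplane} only when $\mathbf{v}_1,\dots,\mathbf{v}_{n-1},\mathbf{a}$ are linearly independent, which the hypotheses do not guarantee (e.g.\ $\mathbf{a}$ may lie in the span of the $\mathbf{v}_i$, in which case both sides are zero but the proportionality argument degenerates), and the proposed evaluation point $\mathbf{z}=\mathbf{a}$ lies \emph{inside} that common kernel, so it cannot pin down the constant. Treat that third paragraph as a heuristic and rest the proof on your first two.
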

\begin{remark*}Note that the matrix specified in the proposition has
vector entries in its first row, and hence its ``determinant'' 
results in a vector.
This Proposition is a generalization of Lagrange's formula for the
vector triple product in $\mathbf{R}^3.$ The proof of this proposition
was inspired by a discussion the last author had with Charles Conley,
and we thank him for his interest.
\end{remark*}
\begin{remark*}We suspect that Proposition~\ref{lagrange}
is probably reasonably well-known, but we were unable to find
a reference to it in the literature.
\end{remark*}
\begin{proof}
Let 
\begin{equation*}
\widetilde{\mathbf{b}}= 
\det\left(\begin{array}{ccc}
\mathbf{w}_1&\dots&\mathbf{w}_n\\
\mathbf{v}_1\cdot\mathbf{w}_1&\dots&
\mathbf{v}_1\cdot\mathbf{w}_n\\
\vdots&\vdots&\vdots\\
\mathbf{v}_{n-1}\cdot\mathbf{w}_1&\dots&
\mathbf{v}_{n-1}\cdot\mathbf{w}_n
\end{array}
\right).
\end{equation*}
We want to show that $\mathbf{b}=(-1)^n\widetilde{\mathbf{b}},$
and for this, it suffices to show that for all
$\mathbf{z}$ in $\mathbf{C}^{n+1},$ we have
$\mathbf{z}\cdot\mathbf{b}=(-1)^n\mathbf{z}\cdot\widetilde{\mathbf{b}}.$
Clearly,
$$
\mathbf{z}\cdot\widetilde{\mathbf{b}}=\det\left(\begin{array}{ccc}
\mathbf{z}\cdot\mathbf{w}_1&\dots&\mathbf{z}\cdot\mathbf{w}_n\\
\mathbf{v}_1\cdot\mathbf{w}_1&\dots&\mathbf{v}_1\cdot\mathbf{w}_n\\
\vdots&\vdots&\vdots\\
\mathbf{v}_{n-1}\cdot\mathbf{w}_1&\dots&\mathbf{v}_{n-1}\cdot\mathbf{w}_n
\end{array}\right).
$$
On the other hand, by Proposition~\ref{boxproduct},
\begin{align*}
\mathbf{z}\cdot\mathbf{b}&=\det(\mathbf{z},\mathbf{v}_1,\dots,\mathbf{v}_{n-1},
\mathbf{a})\\
&=(-1)^n\det(\mathbf{a},\mathbf{z},\mathbf{v}_1,\dots,\mathbf{v}_{n-1})\\
&=(-1)^n\mathbf{a}\cdot L(\mathbf{z}\wedge\mathbf{v}_1\wedge\dots\wedge\mathbf{v}_{n-1})\\
&=(-1)^nL(\mathbf{w}_1\wedge\dots\wedge\mathbf{w}_n)\cdot
L(\mathbf{z}\wedge\mathbf{v}_1\wedge\dots\wedge\mathbf{v}_{n-1})\\
&=(-1)^n(\mathbf{w}_1\wedge\dots\wedge\mathbf{w}_n)\cdot
  (\mathbf{z}\wedge\mathbf{v}_1\wedge\dots\wedge\mathbf{v}_{n-1})\\
&\qquad\textnormal{[since}~L~\textnormal{is an isometry]}\\
&=(-1)^n(\mathbf{z}\wedge\mathbf{v}_1\wedge\dots\wedge\mathbf{v}_{n-1})\cdot
(\mathbf{w}_1\wedge\dots\wedge\mathbf{w}_n)\\
&=(-1)^n\det\left(\begin{array}{ccc}
\mathbf{z}\cdot\mathbf{w}_1&\dots&\mathbf{z}\cdot\mathbf{w}_n\\
\mathbf{v}_1\cdot\mathbf{w}_1&\dots&\mathbf{v}_1\cdot\mathbf{w}_n\\
\vdots&\vdots&\vdots\\
\mathbf{v}_{n-1}\cdot\mathbf{w}_1&\dots&\mathbf{v}_{n-1}\cdot\mathbf{w}_n
\end{array}\right)
\end{align*}
by Proposition~\ref{dotofwedge}.
\end{proof}
\begin{proposition}\label{multicross}
Let $\mathbf{a},\mathbf{u}_1,\dots,\mathbf{u}_n$ be $n+1$ linearly
independent vectors in $\mathbf{C}^{n+1}.$ For $j=1,\dots,n,$ let
$$
\mathbf{v}_j=L(\mathbf{a}\wedge\mathbf{u}_1\wedge\dots\wedge
\mathbf{u}_{j-1}\wedge\mathbf{u}_{j+1}\wedge\dots\wedge\mathbf{u}_{n}).
$$
Then,
$L(\mathbf{v}_1\wedge\dots\wedge\mathbf{v}_{n})=\pm D^{n-1}\mathbf{a},$
where $D=\det(\mathbf{a},\mathbf{u}_1,\dots,\mathbf{u}_n).$
\end{proposition}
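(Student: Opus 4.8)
The plan is to first nail down the geometry of the vectors $\mathbf{v}_j$. By the orthogonality corollary following Proposition~\ref{boxproduct}, each $\mathbf{v}_j=L(\mathbf{a}\wedge\mathbf{u}_1\wedge\dots\wedge\mathbf{u}_{j-1}\wedge\mathbf{u}_{j+1}\wedge\dots\wedge\mathbf{u}_n)$ is orthogonal to $\mathbf{a}$ and to every $\mathbf{u}_k$ with $k\ne j$. Applying Proposition~\ref{boxproduct} with $\mathbf{u}_j$ in the role of the ``extra'' vector gives $\mathbf{u}_j\cdot\mathbf{v}_j=\det(\mathbf{u}_j,\mathbf{a},\mathbf{u}_1,\dots,\widehat{\mathbf{u}_j},\dots,\mathbf{u}_n)$, and cyclically rotating the first $j+1$ rows of this matrix back to the order $(\mathbf{a},\mathbf{u}_1,\dots,\mathbf{u}_j)$ contributes a sign $(-1)^j$, so $\mathbf{u}_j\cdot\mathbf{v}_j=(-1)^jD$. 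In particular this is nonzero.

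From this, linear independence of $\mathbf{v}_1,\dots,\mathbf{v}_n$ is immediate: pairing a relation $\sum_j c_j\mathbf{v}_j=0$ with $\mathbf{u}_i$ forces $c_i(-1)^iD=0$, hence $c_i=0$. Therefore the span $V$ of $\mathbf{v}_1,\dots,\mathbf{v}_n$ is $n$-dimensional; since the bilinear dot product on $\mathbf{C}^{n+1}$ is nondegenerate, its orthogonal complement is one-dimensional. Both $\mathbf{a}$ (by the first paragraph) and $L(\mathbf{v}_1\wedge\dots\wedge\mathbf{v}_n)$ (by the corollary to Proposition~\ref{boxproduct}) lie in that complement, and $\mathbf{a}\ne0$, so $L(\mathbf{v}_1\wedge\dots\wedge\mathbf{v}_n)=\lambda\mathbf{a}$ for some scalar $\lambda$, exactly as in the ``only one direction'' argument used in the proof of Proposition~\ref{disttoopposite}. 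It remains to identify $\lambda$.

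To find $\lambda$ I will evaluate $(\mathbf{v}_1\wedge\dots\wedge\mathbf{v}_n)\cdot(\mathbf{u}_1\wedge\dots\wedge\mathbf{u}_n)$ in two ways. First, Proposition~\ref{dotofwedge} writes it as $\det(\mathbf{v}_i\cdot\mathbf{u}_j)$, and by the first paragraph this matrix is diagonal with entries $(-1)^iD$, so its determinant is $(-1)^{n(n+1)/2}D^n$. Second, since $L$ preserves the dot product (as already invoked in the proof of Proposition~\ref{lagrange}), the same quantity equals $L(\mathbf{v}_1\wedge\dots\wedge\mathbf{v}_n)\cdot L(\mathbf{u}_1\wedge\dots\wedge\mathbf{u}_n)=\lambda\,\mathbf{a}\cdot L(\mathbf{u}_1\wedge\dots\wedge\mathbf{u}_n)=\lambda\det(\mathbf{a},\mathbf{u}_1,\dots,\mathbf{u}_n)=\lambda D$ by Proposition~\ref{boxproduct}. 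Equating the two expressions and cancelling $D\ne0$ yields $\lambda=(-1)^{n(n+1)/2}D^{n-1}$, which is $\pm D^{n-1}$, as claimed.

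I expect the only delicate point to be the sign bookkeeping: the cyclic-shift sign $(-1)^j$ in the first paragraph and the exponent $1+2+\dots+n=n(n+1)/2$ coming from the product of the diagonal entries. Since the statement asserts the value only up to sign, a coarser accounting would still suffice; the computation above additionally pins down the sign as $(-1)^{n(n+1)/2}$. Everything else is forced once one recognizes that the pairing $(\mathbf{v}_1\wedge\dots\wedge\mathbf{v}_n)\cdot(\mathbf{u}_1\wedge\dots\wedge\mathbf{u}_n)$, computed two ways, reads off $\lambda$.
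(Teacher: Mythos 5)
Your argument is correct, sign and all, but it reaches the conclusion by a genuinely different route than the paper. The paper's proof is a direct application of Proposition~\ref{lagrange}: writing $\mathbf{v}_n=L(\mathbf{a}\wedge\mathbf{u}_1\wedge\dots\wedge\mathbf{u}_{n-1})$ and invoking that formula expresses $L(\mathbf{v}_1\wedge\dots\wedge\mathbf{v}_n)$ as $(-1)^n$ times a determinant whose first row is the vector row $\mathbf{a},\mathbf{u}_1,\dots,\mathbf{u}_{n-1}$ and whose remaining entries are the dot products $\mathbf{v}_i\cdot\mathbf{a}$ and $\mathbf{v}_i\cdot\mathbf{u}_k$; the very same orthogonality computations you carry out in your first paragraph ($\mathbf{v}_i\cdot\mathbf{a}=0$, $\mathbf{v}_i\cdot\mathbf{u}_k=0$ for $k\ne i$, $\mathbf{v}_i\cdot\mathbf{u}_i=(-1)^iD$) then reduce that matrix to the nearly diagonal form whose determinant is $\pm D^{n-1}\mathbf{a}$ in one stroke. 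You bypass Proposition~\ref{lagrange} entirely: you first pin down the \emph{direction} of $L(\mathbf{v}_1\wedge\dots\wedge\mathbf{v}_n)$ by showing the $\mathbf{v}_j$ are independent and that both $\mathbf{a}$ and $L(\mathbf{v}_1\wedge\dots\wedge\mathbf{v}_n)$ lie in the one-dimensional orthogonal complement of their span (the same ``only one direction orthogonal to all'' argument used in Proposition~\ref{disttoopposite}), and you then extract the scalar by evaluating $(\mathbf{v}_1\wedge\dots\wedge\mathbf{v}_n)\cdot(\mathbf{u}_1\wedge\dots\wedge\mathbf{u}_n)$ two ways, once as the Gramian determinant $(-1)^{n(n+1)/2}D^n$ via Proposition~\ref{dotofwedge} and once as $\lambda D$ via the isometry property of $L$ and Proposition~\ref{boxproduct}. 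Both proofs ultimately rest on the diagonal structure of $(\mathbf{v}_i\cdot\mathbf{u}_j)$; yours buys independence from the generalized Lagrange formula at the cost of the extra direction-identification step, while the paper's is shorter once Proposition~\ref{lagrange} is available, and your explicit sign $(-1)^{n(n+1)/2}$ agrees with what the paper's final determinant expansion produces.
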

\begin{remark*}The unspecified sign depends only on $n$ and can be
explicitly determined from the proof. Since the sign will not matter for
our purpose, we did not bother to record it here.
\end{remark*}
\begin{proof}
By Proposition~\ref{lagrange}, we get that
$$
L(\mathbf{v}_1\wedge\dots\wedge\mathbf{v}_n)=(-1)^n\det\left(\begin{array}{cccc}
\mathbf{a}&\mathbf{u}_1&\dots&\mathbf{u}_{n-1}\\
\mathbf{v}_1\cdot\mathbf{a}&\mathbf{v}_1\cdot\mathbf{u}_1&\dots&\mathbf{v}_1\cdot\mathbf{u}_{n-1}\\
\vdots&\vdots&\vdots&\vdots\\
\mathbf{v}_{n-1}\cdot\mathbf{a}&\mathbf{v}_{n-1}\cdot\mathbf{u}_1&\dots&\mathbf{v}_{n-1}\cdot\mathbf{u}_{n-1}
\end{array}\right).
$$
If $i\ne j,$ then
$$
\mathbf{v}_i\cdot\mathbf{u}_j = 
L(\mathbf{a}\wedge\dots\wedge\mathbf{u}_{i-1}\wedge\mathbf{u}_{i+1}\wedge
\dots\wedge\mathbf{u}_n)\cdot\mathbf{u}_j =0,
$$
since $\mathbf{u}_j$ appears in the wedge product defining $\mathbf{v}_i,$
and hence $\mathbf{v}_i$ is orthogonal to $\mathbf{u}_j.$ Similarly,
$\mathbf{v}_i\cdot\mathbf{a}=0.$ Moreover,
$$
\mathbf{v}_j\cdot\mathbf{u}_j = 
L(\mathbf{a}\wedge\dots\wedge\mathbf{u}_{j-1}\wedge\mathbf{u}_{j+1}\wedge
\dots\wedge\mathbf{u}_n)\cdot\mathbf{u}_j=(-1)^jD,
$$
by Proposition~\ref{boxproduct}. Hence,
$$
L(\mathbf{v}_1\wedge\dots\wedge\mathbf{v}_n)
=(-1)^n\det\left(\begin{array}{ccccc}
\mathbf{a}&\mathbf{u}_1&\mathbf{u}_2&\dots&\mathbf{u}_{n-1}\\
0&-D&0&\dots&0\\
0&0&D&\dots&0\\
\vdots&\vdots&\vdots&\vdots&\vdots\\
0&0&0&\dots&(-1)^{n-1}D
\end{array}\right)=\pm D^{n-1}\mathbf{a}.\qedhere
$$
\end{proof}
\begin{theorem}\label{dnledet}
Let $\mathbf{u}_0,\dots,\mathbf{u}_n$ be $n+1$ linearly independent unit
vectors in $\mathbf{C}^{n+1}$ representing $n+1$ points in general position in
$\mathbf{CP}^n,$ which we think of as the vertices of a projective simplex.
For each $j$ from $0$ to $n,$ let $d_j$ denote the
Fubini-Study distance from the point represented by $\mathbf{u}_j$ to the
hyperplane containing the opposite face of the simplex. Let $d_{\min}$
denote the minimum of the $d_j.$ Then,
$$
d_{\min}^n \le |\det(\mathbf{u}_0,\dots,\mathbf{u}_n)|.
$$
For equality to hold, at least $n$ of the $n+1$ projective distances
$d_j$ must equal $d_{\min}.$
\end{theorem}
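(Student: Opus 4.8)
The plan is to extract the bound from a single well-chosen application of Proposition~\ref{multicross}, convert it into a norm inequality using Corollary~\ref{normofwedgeineq}, and then translate it back into a statement about the distances $d_j$ via Proposition~\ref{disttoopposite}.

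Set $D=\det(\mathbf{u}_0,\dots,\mathbf{u}_n)$. For each index $j$, let $\mathbf{n}_j=L(\mathbf{u}_0\wedge\dots\wedge\widehat{\mathbf{u}_j}\wedge\dots\wedge\mathbf{u}_n)$ be the vector normal to the hyperplane carrying the face opposite the vertex $\mathbf{u}_j$ (the hat denotes omission). Since the $\mathbf{u}_k$ are linearly independent, each $\mathbf{n}_j$ is nonzero, and Proposition~\ref{disttoopposite} gives $d_j=|D|/|\mathbf{n}_j|$, i.e. $|\mathbf{n}_j|=|D|/d_j$. Now choose an index $i$ with $d_i=\max_j d_j$ and apply Proposition~\ref{multicross} with $\mathbf{a}=\mathbf{u}_i$ and the remaining vertices in the role of $\mathbf{u}_1,\dots,\mathbf{u}_n$. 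The auxiliary vectors $\mathbf{v}_j$ produced there are precisely the $\mathbf{n}_j$ with $j\ne i$, so the conclusion reads $L\big(\bigwedge_{j\ne i}\mathbf{n}_j\big)=\pm D^{\,n-1}\mathbf{u}_i$. Taking norms, using that $L$ is an isometry and that $\mathbf{u}_i$ is a unit vector, yields $\big|\bigwedge_{j\ne i}\mathbf{n}_j\big|=|D|^{\,n-1}$.

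Next I would apply Corollary~\ref{normofwedgeineq} to the $n$ vectors $\{\mathbf{n}_j:j\ne i\}$, obtaining $|D|^{\,n-1}=\big|\bigwedge_{j\ne i}\mathbf{n}_j\big|\le\prod_{j\ne i}|\mathbf{n}_j|=\prod_{j\ne i}\frac{|D|}{d_j}=\frac{|D|^{\,n}}{\prod_{j\ne i}d_j}$, hence $\prod_{j\ne i}d_j\le|D|$. Because $i$ was chosen so that $d_i$ is the \emph{largest} of the distances, each of the $n$ remaining $d_j$ satisfies $d_j\ge d_{\min}$, so $d_{\min}^{\,n}\le\prod_{j\ne i}d_j\le|D|$, which is the desired inequality. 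For the equality clause, if $d_{\min}^{\,n}=|D|$ then the chain above collapses, forcing $\prod_{j\ne i}d_j=d_{\min}^{\,n}$ with all $n$ factors $\ge d_{\min}>0$; a product of $n$ reals each at least $d_{\min}$ equals $d_{\min}^{\,n}$ only if every factor equals $d_{\min}$, so at least $n$ of the $d_j$ equal $d_{\min}$.

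The one idea that makes this work — and the step I expect to carry the real content — is the decision to drop exactly the \emph{maximal} distance from the product. A more pedestrian route applies Corollary~\ref{normofwedgeineq} to all $n+1$ normals at once, together with the adjugate identity $|\det(\mathbf{n}_0,\dots,\mathbf{n}_n)|=|D|^{\,n}$; this only gives $\prod_{j=0}^{n}d_j\le|D|$, hence $d_{\min}^{\,n+1}\le|D|$, which is strictly weaker than the claim since $d_{\min}\le 1$. Proposition~\ref{multicross} is precisely the tool that lets one omit a single factor, and omitting the largest one is what guarantees each of the surviving $n$ factors still dominates $d_{\min}$. Everything else is routine bookkeeping with the isometry $L$ and the relation $|\mathbf{n}_j|=|D|/d_j$.
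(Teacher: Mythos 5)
Your proof is correct and follows essentially the same route as the paper's: apply Proposition~\ref{multicross} to the $n$ face normals obtained by omitting one vertex, bound the product of their norms below by $|D|^{n-1}$ via Corollary~\ref{normofwedgeineq}, and translate back to distances with Proposition~\ref{disttoopposite}. The only (immaterial) difference is which index you omit --- you drop the vertex with maximal distance, while the paper drops $\mathbf{u}_0$ after relabeling so that $d_{\min}=d_n$; since every $d_j\ge d_{\min}$ by definition of the minimum, any choice of omitted index yields $d_{\min}^n\le\prod_{j\ne i}d_j$ and the same equality analysis, so the step you single out as the key idea is not actually essential.
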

\begin{proof}
Let $D=\det(\mathbf{u}_0,\dots\mathbf{u}_n).$
Note that $D\ne0$ by the linear independence (general position) hypothesis.
Without loss of generality, assume that $d_{\min}=d_n.$ Then,
$d_{\min}^n \le d_1d_1\cdot d_n,$ and equality holds if and only if
each of these distances are equal. By Proposition~\ref{disttoopposite},
$$
d_j=\frac{|D|}{|\mathbf{u}_0\wedge\dots\wedge\mathbf{u}_{j-1}\wedge
\mathbf{u}_{j+1}\wedge\dots\wedge\mathbf{u}_n|}.
$$
Thus,
$$
d_{\min}^n \le \frac{|D|^n}{\prod_{j=1}^n |\mathbf{u}_0\wedge\dots\wedge
\mathbf{u}_{j-1}\wedge\mathbf{u}_{j+1}\wedge\dots\wedge\mathbf{u}_n|}.
$$
For $j$ from $1$ to $n,$ let
$$
\mathbf{v}_j=L(\mathbf{u}_0\wedge\dots\wedge\mathbf{u}_{j-1}\wedge
\mathbf{u}_{j+1}\wedge\dots\wedge\mathbf{u}_n),
$$
and we now consider $L(\mathbf{v}_1\wedge\dots\wedge\mathbf{v}_n).$
By Proposition~\ref{multicross},
$$
L(\mathbf{v}_1\wedge\dots\wedge\mathbf{v}_n)=\pm D^{n-1}\mathbf{u}_0.
$$
Hence,
$$
|L(\mathbf{v}_1\wedge\dots\wedge\mathbf{v}_n)|=|D|^{n-1}
$$
since $|\mathbf{u}_0|=1.$ We also know that
$$
|L(\mathbf{v}_1\wedge\dots\wedge\mathbf{v}_n)| =
|\mathbf{v}_1\wedge\dots\wedge\mathbf{v}_n|\le |\mathbf{v}_1|\cdot\dots\cdot
|\mathbf{v}_n|
$$
by Corollary~\ref{normofwedgeineq}. Moreover, the inequality is strict
unless $\mathbf{v}_i\cdot\overline{\mathbf{v}}_j=0$ for all $i\ne j.$
Thus,
\begin{align*}
\prod_{j=1}^n|\mathbf{u}_0\wedge\dots\wedge\mathbf{u}_{j-1}\wedge
\mathbf{u}_{j+1}\wedge\mathbf{u}_{n}|
&=\prod_{j=1}^n|L(\mathbf{u}_0\wedge\dots\wedge\mathbf{u}_{j-1}\wedge
\mathbf{u}_{j+1}\wedge\mathbf{u}_{n})|\\
&=\prod_{j=1}^n|\mathbf{v}_j|\\
&\ge|L(\mathbf{v}_1\wedge\dots\wedge\mathbf{v}_n)|=|D|^{n-1}.
\end{align*}
Hence, 
$$
d_{\min}^n \le \frac{|D|^n}{\prod_{j=1}^n |\mathbf{u}_0\wedge\dots\wedge
\mathbf{u}_{j-1}\wedge\mathbf{u}_{j+1}\wedge\dots\wedge\mathbf{u}_n|}
\le \frac{|D|^n}{|D|^{n-1}}=|D|,
$$
as required, with strict inequality unless $d_1=\dots=d_n$
and $\mathbf{v}_i\cdot\overline{\mathbf{v}}_j=0$ for all $i\ne j.$
\end{proof}
\begin{remark*}Equality of the $n$ distances is not sufficient for
equality to hold in Theorem~\ref{dnledet}, but the proof of
Theorem~\ref{dnledet} suggests the following conjecture.
\end{remark*}
\begin{conjecture}With notation as in Theorem~\ref{dnledet}, fix $0<D\le1$
and consider all configurations of
$\mathbf{u}_0,\dots,\mathbf{u}_n$ such that
$D=|\det(\mathbf{u}_0,\dots,\mathbf{u}_n)|.$ Among all such configurations,
the configuration with the largest $d_{\min}$ will be a regular simplex.
\end{conjecture}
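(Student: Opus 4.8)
\emph{Passing to the Gram matrix.} We do not have a proof; here is the approach that seems most promising, and where it gets stuck. For unit vectors $\mathbf{u}_0,\dots,\mathbf{u}_n$ let $G$ be the $(n+1)\times(n+1)$ positive-definite Hermitian matrix with $G_{ij}=\mathbf{u}_i\cdot\overline{\mathbf{u}}_j$. Then $G$ has unit diagonal, $|\det(\mathbf{u}_0,\dots,\mathbf{u}_n)|^2=\det G$ by Corollary~\ref{normofwedge}, and, writing $G_{(j)}$ for the principal submatrix obtained by deleting row $j$ and column $j$, Proposition~\ref{disttoopposite} together with Corollary~\ref{normofwedge} gives $d_j^2=\det G/\det G_{(j)}$, which equals $1/(G^{-1})_{jj}$ by Cramer's rule. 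Hence, fixing $0<D\le 1$, maximizing $d_{\min}$ over configurations with $|\det(\mathbf{u}_0,\dots,\mathbf{u}_n)|=D$ is the same as
$$
\text{minimizing}\quad\max_{0\le j\le n}(G^{-1})_{jj}\quad\text{over}\quad\{\,G\succ 0:\ G_{jj}=1\ \text{for all}\ j,\ \det G=D^2\,\},
$$
and if $\mu^\ast$ is this minimum then the maximal $d_{\min}$ equals $(\mu^\ast)^{-1/2}$. The regular simplex corresponds to $G=(1-\rho)I+\rho J$, with $I$ the identity, $J$ the all-ones matrix, and $\rho\in(0,1)$ the unique root of $(1+n\rho)(1-\rho)^n=D^2$; for it every $(G^{-1})_{jj}$ equals $\bigl(1+(n-1)\rho\bigr)/\bigl((1-\rho)(1+n\rho)\bigr)$.

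\emph{Detaching the determinant constraint.} For $\mu\ge 1$ put $K_\mu=\{\,G\succ 0:\ G_{jj}=1\ \text{and}\ (G^{-1})_{jj}\le\mu\ \text{for all}\ j\,\}$, a compact convex set containing $I$; by Hadamard's inequality $\det G\le 1$ on $K_\mu$, so $\{\det G:G\in K_\mu\}$ is an interval $[m_\mu,1]$ with $m_\mu=\min_{K_\mu}\det G$. A configuration with $|\det|=D$ and $\max_j(G^{-1})_{jj}\le\mu$ exists exactly when $D^2\ge m_\mu$, so $\mu^\ast=\min\{\mu\ge 1:m_\mu\le D^2\}$; since $\mu\mapsto m_\mu$ is continuous and nonincreasing with $m_1=1$ and $m_\mu\to 0$ as $\mu\to\infty$, this gives $m_{\mu^\ast}=D^2$. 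Consequently the conjecture is equivalent to the assertion that, for each $\mu\ge 1$, the minimum of $\det G$ over $K_\mu$ is attained, uniquely up to conjugation by a permutation matrix times a diagonal unitary matrix, at $G=(1-\rho)I+\rho J$ with $\bigl(1+(n-1)\rho\bigr)/\bigl((1-\rho)(1+n\rho)\bigr)=\mu$.

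\emph{The reduced problem.} Minimizing $\det G$, equivalently the strictly concave $\log\det G$, over the convex body $K_\mu$ attains its minimum at an extreme point. A short computation shows the derivative of $(G^{-1})_{jj}$ in a Hermitian direction $E$ is $-\mathbf{g}_j^\ast E\mathbf{g}_j$, where $\mathbf{g}_j$ is the $j$-th column of $G^{-1}$, and that $G\in K_\mu$ is extreme precisely when the only Hermitian $E$ with $E_{jj}=0$ for all $j$ and $E\mathbf{g}_j=0$ for every $j$ with $(G^{-1})_{jj}=\mu$ is $E=0$. One checks that $(1-\rho)I+\rho J$ is such an extreme point — all $n+1$ constraints are active — that it is a local minimum of $\det$ on $K_\mu$, and that it is a Karush--Kuhn--Tucker point, the condition $G^{-1}=\sum_j\lambda_j\mathbf{g}_j\mathbf{g}_j^\ast+\mathrm{diag}(\nu)$, $\lambda_j\ge 0$, holding with all $\lambda_j$ equal and positive. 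What remains is to show it is the \emph{global} minimizer among extreme points. For $n=2$ this can be carried out: there $\det G\le 1-\sum_{i<j}|G_{ij}|^2+2\prod_{i<j}|G_{ij}|$, and feeding in the constraints $(G^{-1})_{jj}\le\mu$ forces $\det G$ to be at least its equilateral value, with equality only for the equilateral configuration; the reduction above then produces the regular triangle as the unique maximizer of $d_{\min}$.

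\emph{The obstacle.} The reduced problem is genuinely non-convex: the constraint $\det G=D^2$ — equivalently, picking out the extremal determinant value $m_\mu$ inside $K_\mu$ — cannot be convexified, since relaxing it (or the unit-diagonal constraint) collapses the optimum to $G=I$. One must therefore classify and compare the extreme points of $K_\mu$. The step that defeats us for $n\ge 3$ is ruling out the extreme points whose off-diagonal entries all have the same modulus but whose phases are ``twisted'' — these need not be conjugate to $(1-\rho)I+\rho J$, yet can have all $(G^{-1})_{jj}$ equal and so compete for the minimum — together with the extreme points on the lower-dimensional faces of $K_\mu$, at which only some $d_j$ attain $d_{\min}$; deciding these seems to require a genuinely second-order comparison of $\det G$ along the phase directions, and it is this comparison that we have not been able to carry out, so we record the statement only as a conjecture.
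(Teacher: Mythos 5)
This statement is left as a conjecture in the paper --- the authors offer only numerical evidence (Figure~\ref{fig}) and the remark that equality of the $n$ distances is not sufficient for equality in Theorem~\ref{dnledet} --- so there is no proof of the authors' to compare yours against, and you are right not to claim one. Your reformulation is correct and goes beyond anything in the paper: with $G_{ij}=\mathbf{u}_i\cdot\overline{\mathbf{u}}_j$ one indeed has $\det G=|\det(\mathbf{u}_0,\dots,\mathbf{u}_n)|^2$ by Corollary~\ref{normofwedge} and $d_j^2=\det G/\det G_{(j)}=1/(G^{-1})_{jj}$ by Proposition~\ref{disttoopposite}, every positive-definite unit-diagonal Hermitian matrix arises this way, and your extreme-point criterion for $K_\mu$ is right (for an active constraint, convexity of $t\mapsto((G+tE)^{-1})_{jj}$ forces that function to be constant along any segment through $G$ inside $K_\mu$, which happens exactly when $E\mathbf{g}_j=0$). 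The passage from the constrained problem to minimizing $\det G$ over $K_\mu$ is also sound, granting the (plausible but unproved) continuity and strict monotonicity of $\mu\mapsto m_\mu$.

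Two caveats beyond the obstacle you already name. First, your $n=2$ claim is asserted rather than carried out, and the one inequality you display points the wrong way for it: $\det G=1-\sum_{i<j}|G_{ij}|^2+2\,\mathrm{Re}(G_{01}G_{12}G_{20})$, so the phase term can \emph{decrease} $\det G$ below $1-\sum|G_{ij}|^2-2\prod|G_{ij}|$'s complement, and it is precisely the minimization over phases, coupled with the constraints $(G^{-1})_{jj}\le\mu$ (which themselves depend on the phases), that has to be controlled; nothing in the write-up does this, so even the planar case should be regarded as open in your account. Second, the phase ambiguity you flag is not merely a technical nuisance but already infects the statement of the conjecture: in $\mathbf{CP}^n$ the diagonal-unitary orbit of $G$ only normalizes the phases up to the cycle invariants of $(G_{ij})$, so there are inequivalent configurations with all pairwise data equal, and any eventual theorem must specify which of these is ``the'' regular simplex (presumably $G=(1-\rho)I+\rho J$ with $\rho>0$ real). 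In short: a genuinely useful reduction, honestly presented, but the conjecture remains unproved here as in the paper.
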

\begin{remark*}
When $D<1,$ equality will not hold in Theorem~\ref{dnledet} for the
regular simplex with determinant $D.$
\end{remark*}
\par
We now observe that if we like, we could just as easily work with vectors
defining the faces of the simplices, rather than the vertices.
\begin{proposition}\label{faces}
Let $\mathbf{a},\mathbf{b}_1,\dots\mathbf{b}_n$ be $n+1$ linearly independent
unit vectors in $\mathbf{C}^{n+1}.$ We think of the vectors as the 
coefficients of linear forms defining hyperplanes in $\mathbf{CP}^n.$
By linear independence, the hyperplanes are in general position and thus
determine a simplex. Let $d$ denote the distance from the hyperplane
determined by $\mathbf{a}$ to the vertex of the simplex where the hyperplanes
determined by $\mathbf{b}_1,\dots\mathbf{b}_n$ intersect. Then,
$$
d=\frac{|\det(\mathbf{a},\mathbf{b}_1,\dots,\mathbf{b_n})|}
{|\mathbf{b}_1\wedge\dots\wedge\mathbf{b}_n|}.
$$
\end{proposition}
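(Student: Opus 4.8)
The plan is to deduce Proposition~\ref{faces} from Proposition~\ref{disttoopposite} by passing to the ``dual'' picture, so that the minimization need not be repeated. First I would identify the vertex in question. Set $\mathbf{p}=L(\mathbf{b}_1\wedge\dots\wedge\mathbf{b}_n)$, which is nonzero by the linear independence hypothesis. By the corollary following Proposition~\ref{boxproduct}, $\mathbf{p}$ is orthogonal to each $\mathbf{b}_j$ with respect to the bilinear dot product, and that is precisely the condition for the point represented by $\mathbf{p}$ to lie on the hyperplane determined by $\mathbf{b}_j$. Since the $\mathbf{b}_j$ are $n$ linearly independent vectors in $\mathbf{C}^{n+1}$, only one direction is orthogonal to all of them, so $\mathbf{p}$ represents the vertex where the hyperplanes determined by $\mathbf{b}_1,\dots,\mathbf{b}_n$ meet, and $d$ is the Fubini-Study distance from that point to the hyperplane determined by $\mathbf{a}$.

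Next I would realize the hyperplane $\{\mathbf{x}:\mathbf{a}\cdot\mathbf{x}=0\}$ as a span, choosing unit vectors $\mathbf{c}_1,\dots,\mathbf{c}_n$ forming a basis of this $n$-dimensional subspace. Because $\det(\mathbf{a},\mathbf{b}_1,\dots,\mathbf{b}_n)=\mathbf{a}\cdot\mathbf{p}\ne0$ by Proposition~\ref{boxproduct} together with the linear independence hypothesis, $\mathbf{p}$ is not in this subspace, so $\mathbf{p}/|\mathbf{p}|,\mathbf{c}_1,\dots,\mathbf{c}_n$ are $n+1$ linearly independent unit vectors, and Proposition~\ref{disttoopposite} gives
$$
d=\frac{|\det(\mathbf{p}/|\mathbf{p}|,\mathbf{c}_1,\dots,\mathbf{c}_n)|}{|\mathbf{c}_1\wedge\dots\wedge\mathbf{c}_n|}=\frac{|\det(\mathbf{p},\mathbf{c}_1,\dots,\mathbf{c}_n)|}{|\mathbf{p}|\,|\mathbf{c}_1\wedge\dots\wedge\mathbf{c}_n|}.
$$

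Then I would simplify. The vector $L(\mathbf{c}_1\wedge\dots\wedge\mathbf{c}_n)$ is orthogonal to every $\mathbf{c}_j$, and so is $\mathbf{a}$, since each $\mathbf{c}_j$ lies on the hyperplane determined by $\mathbf{a}$; by uniqueness of the orthogonal direction, $L(\mathbf{c}_1\wedge\dots\wedge\mathbf{c}_n)=\lambda\mathbf{a}$ for some scalar $\lambda$, and $|\lambda|=|\mathbf{c}_1\wedge\dots\wedge\mathbf{c}_n|$ because $L$ is an isometry and $|\mathbf{a}|=1$. By Proposition~\ref{boxproduct}, $\det(\mathbf{p},\mathbf{c}_1,\dots,\mathbf{c}_n)=\mathbf{p}\cdot L(\mathbf{c}_1\wedge\dots\wedge\mathbf{c}_n)=\lambda(\mathbf{a}\cdot\mathbf{p})$, so the $|\mathbf{c}_1\wedge\dots\wedge\mathbf{c}_n|$ factors cancel and $d=|\mathbf{a}\cdot\mathbf{p}|/|\mathbf{p}|$. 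Finally $\mathbf{a}\cdot\mathbf{p}=\mathbf{a}\cdot L(\mathbf{b}_1\wedge\dots\wedge\mathbf{b}_n)=\det(\mathbf{a},\mathbf{b}_1,\dots,\mathbf{b}_n)$ by Proposition~\ref{boxproduct}, while $|\mathbf{p}|=|\mathbf{b}_1\wedge\dots\wedge\mathbf{b}_n|$ since $L$ is an isometry, which is the asserted formula.

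I expect the only real points of care, rather than a genuine obstacle, to be the bookkeeping: remembering that incidence of points and hyperplanes is governed by the bilinear dot product (which is exactly what the corollary to Proposition~\ref{boxproduct} supplies, not the Hermitian product), and checking that the $n+1$ unit vectors fed to Proposition~\ref{disttoopposite} are in general position, which comes down to $\det(\mathbf{a},\mathbf{b}_1,\dots,\mathbf{b}_n)\ne0$. Alternatively, one could give a self-contained argument mimicking the proof of Proposition~\ref{disttoopposite}, picking an orthogonal basis with $\mathbf{e}_0$ proportional to $\overline{\mathbf{a}}$ so that the hyperplane determined by $\mathbf{a}$ becomes the span of $\mathbf{e}_1,\dots,\mathbf{e}_n$; but going through Proposition~\ref{disttoopposite} is shorter and reuses the minimization already done there.
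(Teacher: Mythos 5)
Your argument is correct and arrives at the formula by a lighter route than the paper's. Both proofs identify the vertex with the normalization of $L(\mathbf{b}_1\wedge\dots\wedge\mathbf{b}_n)$ and then apply Proposition~\ref{disttoopposite}; they differ in how the hyperplane determined by $\mathbf{a}$ is presented as a span. The paper spans it by the remaining $n$ vertices of the simplex, $\mathbf{v}_j=L(\mathbf{a}\wedge\mathbf{b}_1\wedge\dots\wedge\mathbf{b}_{j-1}\wedge\mathbf{b}_{j+1}\wedge\dots\wedge\mathbf{b}_n),$ and must then compute $L(\mathbf{v}_1\wedge\dots\wedge\mathbf{v}_n)=\pm D^{n-1}\mathbf{a}$ via Proposition~\ref{multicross}, which rests on the generalized Lagrange identity of Proposition~\ref{lagrange}; the resulting factors of $|D|^{n-1}$ then cancel. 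You instead span the hyperplane by an arbitrary basis of unit vectors $\mathbf{c}_1,\dots,\mathbf{c}_n$ of the subspace $\{\mathbf{x}:\mathbf{a}\cdot\mathbf{x}=0\}$ and show that the choice washes out, since $L(\mathbf{c}_1\wedge\dots\wedge\mathbf{c}_n)$ must be a scalar multiple of $\mathbf{a}$ of modulus $|\mathbf{c}_1\wedge\dots\wedge\mathbf{c}_n|$, by the same uniqueness-of-the-orthogonal-direction argument already used inside the proof of Proposition~\ref{disttoopposite}. Your version therefore needs only Proposition~\ref{boxproduct} and the isometry of $L$, decoupling Proposition~\ref{faces} from Propositions~\ref{lagrange} and~\ref{multicross} entirely (the paper needs those anyway for Theorem~\ref{dnledet}, so it loses nothing by reusing them here, and its choice of spanning vectors keeps the geometric picture of the simplex's own vertices in view). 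The two points of care you flag, namely that incidence of points and hyperplanes is governed by the bilinear dot product and that $\det(\mathbf{a},\mathbf{b}_1,\dots,\mathbf{b}_n)\ne0$ supplies the general position hypothesis needed to invoke Proposition~\ref{disttoopposite}, are exactly the right ones and are handled correctly.
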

\begin{remark*}Observe that the distance formula here is identical
to that in Proposition~\ref{disttoopposite}. Thus, 
Theorem~\ref{dnledet} and Corollary~\ref{distgedet} immediately
translate to the following corollary.
\end{remark*}
\begin{cor}\label{facescor}
Let $\mathbf{u}_0,\dots,\mathbf{u}_n$ be $n+1$ linearly independent unit
vectors in $\mathbf{C}^{n+1}$ representing $n+1$ linear forms
defining $n+1$ hyperplanes in general position in
$\mathbf{CP}^n,$ which we think of as the faces of a projective simplex.
For each $j$ from $0$ to $n,$ let $d_j$ denote the
Fubini-Study distance from the hyperplane represented by $\mathbf{u}_j$ to the
the opposite vertex of the simplex. Let $d_{\min}$
denote the minimum of the $d_j.$ Then,
$$
d_{\min}^n \le |\det(\mathbf{u}_0,\dots,\mathbf{u}_n)| \le d_{\min}.
$$
\end{cor}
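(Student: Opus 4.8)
The plan is to observe, as the remark preceding the statement already hints, that essentially nothing new needs to be done: the distance formula of Proposition~\ref{faces} is verbatim the one in Proposition~\ref{disttoopposite}, so Corollary~\ref{facescor} is simply Theorem~\ref{dnledet} together with the inequality in Corollary~\ref{distgedet}, read through the dictionary ``points $\leftrightarrow$ hyperplanes''. Concretely, put $D=\det(\mathbf{u}_0,\dots,\mathbf{u}_n)$, which is nonzero by the general-position hypothesis, and for each $j$ write $\mathbf{u}_0\wedge\dots\wedge\widehat{\mathbf{u}}_j\wedge\dots\wedge\mathbf{u}_n$ for the wedge of all the $\mathbf{u}_i$ except $\mathbf{u}_j$ (the hat marks the omitted factor). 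Proposition~\ref{faces} says that the distance $d_j$ from the hyperplane represented by $\mathbf{u}_j$ to the vertex cut out by the remaining $n$ hyperplanes is
$$
d_j=\frac{|D|}{|\mathbf{u}_0\wedge\dots\wedge\widehat{\mathbf{u}}_j\wedge\dots\wedge\mathbf{u}_n|}.
$$
Since $|D|$ and each of these wedge norms depend only on the vectors $\mathbf{u}_i$ and not on whether we picture them as points or as linear forms, every inequality proved for the vertex picture transfers unchanged.

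For the right-hand inequality $|D|\le d_{\min}$, I would repeat the proof of Corollary~\ref{distgedet}: each $\mathbf{u}_i$ is a unit vector, so Corollary~\ref{normofwedgeineq} gives $|\mathbf{u}_0\wedge\dots\wedge\widehat{\mathbf{u}}_j\wedge\dots\wedge\mathbf{u}_n|\le1$, whence $d_j\ge|D|$ for every $j$ and therefore $d_{\min}\ge|D|$. For the left-hand inequality $d_{\min}^n\le|D|$, I would run the argument of Theorem~\ref{dnledet} line for line: after relabeling so that $d_{\min}=d_n$ one has $d_{\min}^n\le d_1\dots d_n$, which by the displayed formula equals $|D|^n$ divided by $\prod_{j=1}^n|\mathbf{u}_0\wedge\dots\wedge\widehat{\mathbf{u}}_j\wedge\dots\wedge\mathbf{u}_n|$; setting $\mathbf{v}_j=L(\mathbf{u}_0\wedge\dots\wedge\widehat{\mathbf{u}}_j\wedge\dots\wedge\mathbf{u}_n)$, Proposition~\ref{multicross} gives $|L(\mathbf{v}_1\wedge\dots\wedge\mathbf{v}_n)|=|D|^{n-1}$ because $|\mathbf{u}_0|=1$, while Corollary~\ref{normofwedgeineq} gives $\prod_{j=1}^n|\mathbf{v}_j|\ge|\mathbf{v}_1\wedge\dots\wedge\mathbf{v}_n|=|D|^{n-1}$; combining these yields $d_{\min}^n\le|D|$.

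I do not anticipate any genuine obstacle here, since the only substantive content is Proposition~\ref{faces} itself, whose role is precisely to produce the dual distance formula; once it is in hand, the corollary is a pure restatement. The single point that deserves a moment's attention is bookkeeping: one must be sure that the vertex ``opposite'' to the face $\mathbf{u}_j$ is the intersection of exactly $\{\mathbf{u}_i : i\ne j\}$, so that the omitted index agrees on both sides of the formula. This is exactly the configuration described in Proposition~\ref{faces}, so no further verification is needed and the chain of inequalities above closes.
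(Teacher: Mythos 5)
Your proposal is correct and matches the paper's treatment exactly: the paper likewise observes that the distance formula in Proposition~\ref{faces} coincides with that of Proposition~\ref{disttoopposite}, so that Theorem~\ref{dnledet} and Corollary~\ref{distgedet} transfer verbatim to give both inequalities. No further commentary is needed.
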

\begin{figure}
\begin{picture}(300,300)
\put(10,0){\includegraphics[trim = 0cm 4cm 0cm 5cm, clip, scale=0.5]{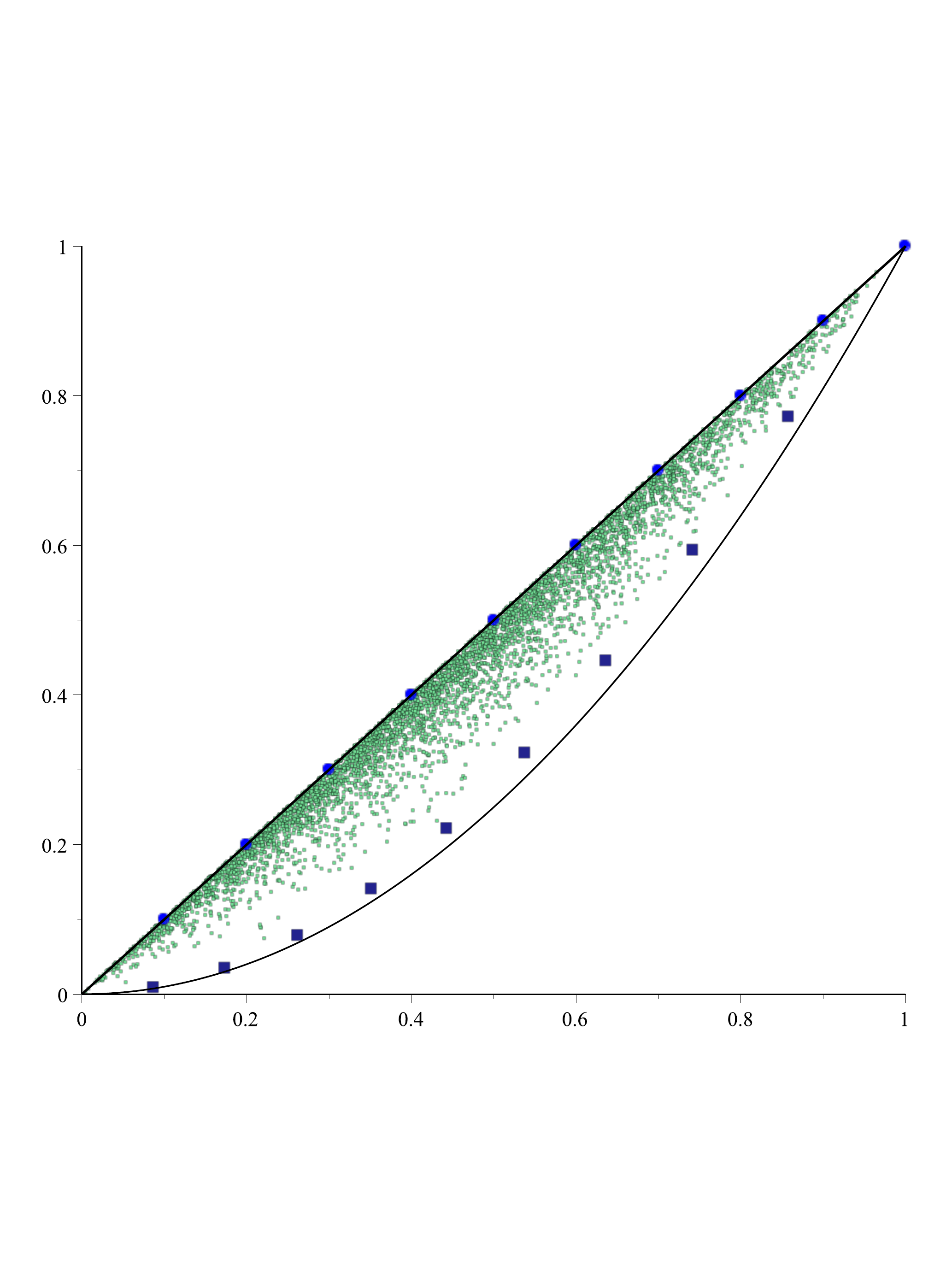}}
\put(190,90){$|D|=d_{\min}^2$}
\put(95,130){$|D|=d_{\min}$}
\put(150,0){$d_{\min}$}
\put(0,150){$|D|$}
\end{picture}
\caption{$|D|$ versus $d_{\min}$ in the case of dimension $n=2.$}\label{fig}
\end{figure}
\begin{remark*}Figure~\ref{fig} illustrates the inequalities constraining the
absolute value of the determinant and the minimum distance in the case
when $n=2,$ \textit{i.e.,} for the case of projective triangles in the
projective plane. The points marked as circles along
the line $|D|=d_{\min}$ illustrate isosceles triangles,
as in Example~\ref{isosceles}. The points marked as squares just above
the curve $|D|=d_{\min}^2$ are from equilateral triangles. The other points
are triangles with randomly generated vertices.
\end{remark*}
\begin{proof}[Proof of Proposition~\ref{faces}.]
Let
$$
\mathbf{u}=\frac{L(\mathbf{b}_1\wedge\dots\wedge\mathbf{b}_n)}
{|\mathbf{b}_1\wedge\dots\wedge\mathbf{b}_n|},
$$
which is the unit vector representing the vertex of the simplex where
the hyperplanes determined by $\mathbf{b}_1,\dots,\mathbf{b}_n$
intersect. For $j=1,\dots,n,$ let
$$
\mathbf{v_j}=L(\mathbf{a}\wedge\mathbf{b}_1\wedge\dots\wedge
\mathbf{b}_{j-1}\wedge\mathbf{b}_{j+1}\wedge\dots\wedge
\mathbf{b}_n).
$$
Then, the vectors $\mathbf{v}_j,$ which are not necessarily unit vectors,
represent the $n$ other vertices of the simplex. By
Proposition~\ref{disttoopposite} and Proposition~\ref{boxproduct},
$$
d = \frac{\left|\det\left(\mathbf{u},\frac{\mathbf{v}_1}{|\mathbf{v}_1|},
\dots,\frac{\mathbf{v}_n}{|\mathbf{v}_n|}\right)\right|}
{\left|\frac{\mathbf{v}_1}{|\mathbf{v}_1|}\wedge\dots\wedge
\frac{\mathbf{v}_n}{|\mathbf{v}_n|}\right|}=
\frac{|\mathbf{u}\cdot L(\mathbf{v}_1\wedge\dots\wedge\mathbf{v}_n)|}
{|\mathbf{v}_1\wedge\dots\wedge\mathbf{v}_n|}
$$
By Proposition~\ref{multicross},
$L(\mathbf{v}_1\wedge\dots\wedge\mathbf{v}_n)=\pm D^{n-1}\mathbf{a},$
where $D=\det(\mathbf{a},\mathbf{b}_1,\dots,\mathbf{b}_n).$
Thus,
\begin{align*}
d&=\frac{|u\cdot L(\mathbf{v}_1\wedge\dots\wedge\mathbf{v}_n)|}{
|\mathbf{v}_1\wedge\dots\wedge\mathbf{v}_n|}\\
&=\frac{|D|^{n-1}|\mathbf{u}\cdot\mathbf{a}|}{|D|^{n-1}}
\qquad\textnormal{[since}~\mathbf{a}~\textnormal{is a unit vector]}\\
&=\frac{|L(\mathbf{b}_1\wedge\dots\wedge\mathbf{b}_n)\cdot \mathbf{a}|}
{|\mathbf{b}_1\wedge\dots\wedge\mathbf{b}_n|}
\qquad\textnormal{[by the definition of}~\mathbf{u}]\\
&=\frac{|\det(\mathbf{a},\mathbf{b}_1,\dots,\mathbf{b}_n)|}
{|\mathbf{b}_1\wedge\dots\wedge\mathbf{b}_n|}
\end{align*}
by Proposition~\ref{boxproduct}.
\end{proof}
\par\smallskip
We conclude by explaining some of the initial motivation coming from
complex function theory for this investigation.
Let $\mathbf{D}$ denote the unit disc in the 
complex plane. In the 1940's, J.~Dufresnoy \cite{Dufresnoy}
studied complex analytic mappings $f$ from $\mathbf{D}$ to
$\mathbf{CP}^n$ such that the image of $f$ omits at least $2n+1$
hyperplanes in general position in $\mathbf{CP}^n,$ where here
\textit{general position} means that the linear
forms defining any $n+1$ of the hyperplanes will be linearly independent.
As in \cite{CherryEremenko},
we let $f^\#$ denote the Fubini-Study derivative of $f,$ which measures how
much the mapping $f$ distorts length, where length in $\mathbf{D}$ is measured
with respect to the standard Euclidean metric and length in $\mathbf{CP}^n$
is measured with respect to the Fubini-Study metric. A consequence of
Dufresnoy's work is that $f^\#(0)$ is bounded above by a constant depending 
only on the dimension $n$ and the set of omitted hyperplanes, but 
Defresnoy remarked in his 1944 paper that the constant depends on the
omitted hyperplanes in a ``completely unknown'' way.
By making a portion, \textit{cf.}\ \cite{Eremenko}, of
the potential-theoretic method of Eremenko and Sodin \cite{EremenkoSodin}
effective, Cherry and Eremenko \cite{CherryEremenko}
were able to give an explicit and effective
estimate on how the constant depends on the omitted hyperplanes.
Cherry and Eremenko's bound was expressed in terms of the
singular values
of the $(n+1)\times(n+1)$ matrices formed by the coefficients of 
the normalized linear forms defining $n+1$
of the omitted hyperplanes. Let $P$ be a point
in $\mathbf{CP}^n$ where $n$ of the $2n+1$ omitted hyperplanes intersect,
and let $Q$ be a point where a different $n$ of the $2n+1$ omitted hyperplanes
intersect. Then, the projective line connecting $P$ with $Q$ will intersect
the $2n+1$
omitted hyperplanes in only three points: it will intersect $n$ of the
hyperplanes at $P,$ another $n$ at $Q$ and the last one at some third point
$R.$ Such a line is called a \textit{diagonal} line for the hyperplane
configuration. In the event that the hyperplane configuration is such that
for some diagonal line, two of the three points $P,$ $Q,$ and $R$ are very
close together, it is not hard to see that one can find a complex analytic
map $f$ from $\mathbf{D}$ into the diagonal line omitting the three points
such that $f^\#(0)$ is very large. One is then led to ask if this is the only
way one can get a very large value of $f^\#(0).$
One would thus like to know how this minimum distance among the pairs
of points in $\{P,Q,R\}$ compares to the singular values
appearing in the Cherry-Eremenko bound. Rather than look initially at 
collections of $2n+1$ hyperplanes in $\mathbf{CP}^n,$ we began with the
easier situation of $n+1$ hyperplanes in $\mathbf{CP}^n$ and did some numerical
experiments comparing the singular values of the matrices formed by the
coefficients of the defining forms of the hyperplanes and the projective
distances from the hyperplanes to the opposite vertices of the simplex
whose faces are
contained in the given hyperplanes. These opposite
vertices would be the points determining the diagonal lines in bigger
configurations of hyperplanes. Although the Cherry-Eremenko bound is expressed
only in terms of some of the singular values, we realized that we could
obtain prettier results for the determinant, whose absolute value is of course
the square root of the product of all the singular values.
We therefore decided to write this note
focusing on the pure projective geometry of the simplices and leave the
possible application to complex function theory to another time.

\end{document}